\tikzstyle{punkt}=[circle, fill=black, minimum size=1mm,inner sep=0pt, draw]
\newtheorem{theorem}{Theorem}[section]
\newtheorem{proposition}[theorem]{Proposition}
\newtheorem{corollary}[theorem]{Corollary}
\newtheorem*{theorem*}{Theorem}
\theoremstyle{definition}
\newtheorem{definition}[theorem]{Definition}
\newtheorem{example}[theorem]{Example}
\newtheorem{problem}[theorem]{Problem}
\newtheorem{conjecture}[theorem]{Conjecture}
\newtheorem{remark}[theorem]{Remark}
\newcommand{\NN}{ \ensuremath{\mathbb{N}}}
\newcommand{\KK}{ \ensuremath{\mathbb{K}}}
\newcommand{\ac}{\mathfrak{a}}
\DeclareMathOperator{\reg}{reg}
\newcommand{\Hom}{\mathrm{Hom}}
\DeclareMathOperator{\Tor}{Tor}
\DeclareMathOperator{\pdim}{pdim}
\DeclareMathOperator{\Spec}{Spec}
\DeclareMathOperator{\init}{in}
\newcommand{\height}{\mathrm{height}}
\DeclareMathOperator{\depth}{depth}
\DeclareMathOperator{\lk}{lk}
\DeclareMathOperator{\Min}{Min}
\newcommand{\pc}{\mathfrak{p}}
\DeclareMathOperator{\lin}{lin}
\newcommand{\set}[1]{\{#1\}}
\newcommand{\with}{\colon}
\begin{document}

\title{Algebraic properties of ideals of poset homomorphisms}

\author{Martina Juhnke-Kubitzke}
\address{Martina Juhnke-Kubitzke, Universit\"at Osnabr\"uck, Institut f\"ur Mathematik, 49069 Osnabr\"uck, Germany}
\email{juhnke-kubitzke@uni-osnabrueck.de}
\author{Lukas Katth\"an}
\address{Lukas Katth\"an, Goethe-Universit\"at Frankfurt, Institut f\"ur Mathematik, 60054 Frankfurt am Main, Germany}
\email{katthaen@math.uni-frankfurt.de}
\author{Sara Saeedi Madani}
\address{Sara Saeedi Madani, Universit\"at Osnabr\"uck, Institut f\"ur Mathematik, 49069 Osnabr\"uck, Germany}
\email{sara.saeedimadani@uni-osnabrueck.de}

\subjclass[2010]{Primary: 13D02, 05E40; Secondary: 06A11, 13F55.}
\keywords{monomial ideal; poset homomorphism; letterplace ideal; Betti numbers.}

\thanks{The first author and the third author were supported by the German Research Council DFG-GRK~1916.}

\begin{abstract}
Given finite posets $P$ and $Q$, we consider a specific ideal $L(P,Q)$, whose minimal monomial generators correspond to order-preserving maps $\phi:P\rightarrow Q$. 
We study algebraic invariants of those ideals. In particular, sharp lower and upper bounds for the Castelnuovo-Mumford regularity and the projective dimension are provided. Precise formulas are obtained for a large subclass of these ideals. 
Moreover, we provide complete characterizations for several algebraic properties of $L(P,Q)$, including being Buchsbaum, Cohen-Macaulay, Gorenstein, Golod and having a linear resolution. 
\end{abstract}

\maketitle

\section{Introduction}
The study of monomial ideals and the interplay between their algebraic and combinatorial properties is a central topic in combinatorial commutative algebra and algebraic combinatorics. Often there exists a one-to-one correspondence between a class of squarefree monomial ideals and certain combinatorial objects. One of the most prominent and classical examples for such a correspondence is provided by simplicial complexes and their Stanley-Reisner ideals. Other instances for such a relationship comprise edge and path ideals of graphs, as well as face and facet ideals of simplicial complexes.\\
In this paper, we are interested in the algebraic properties of a class of monomial ideals that are associated to two finite posets $P$ and $Q$.
Those ideals were introduced in a recent paper \cite{FGH} by Fl{\o}ystad, Greve and Herzog and further studied in \cite{HQS}.
Special classes of these ideals are closely related to generalized Hibi ideals and multichain ideals, as studied in \cite{EHM}.

Given posets $P$ and $Q$, we denote by $S_{P,Q}$ the polynomial ring over a field $\KK$ with variables $x_{p,q}$, where $p\in P$ and $q\in Q$. To any order-preserving map $\phi:P\rightarrow Q$, one associates the monomial
\[ u_\phi := \prod_{p \in P} x_{p,\phi(p)}. \]
The ideal associated to $P$ and $Q$, as defined in \cite{FGH}, is the monomial ideal $L(P,Q)$ in $S_{P,Q}$ that is generated by all monomials $u_\phi$ corresponding to order-preserving maps $\phi: P \rightarrow Q$. We denote $S_{P,Q}/L(P,Q)$ by $\KK[P,Q]$. 
If $P$ and $Q$, respectively, is a chain, then $L(P,Q)$ is called a \emph{letterplace ideal} and \emph{co-letterplace ideal}, respectively, in \cite{FGH}.

It was shown in \cite{FGH} that many interesting and known classes of monomial ideals, e.g., generalized ideals of Hibi type \cite{EHM}, initial ideals of determinantal ideals, Ferrers ideals and uniform face ideals, arise from the ideals $L(P,Q)$ via reduction with particular regular sequences. Besides an intrinsic interest in the properties of the ideals $L(P,Q)$, this connection provides an additional motivation for their study, since a lot of algebraic invariants and properties, as Castelnuovo-Mumford regularity, projective dimension, Cohen-Macaulay-ness and Gorenstein-ness are preserved by taking regular quotients. 

Driven by this objective, we first investigate some basic properties of these ideals in \Cref{sect:Reduction}.
More precisely, we show that the computation of the Castelnuovo-Mumford regularity and the projective dimension of $L(P,Q)$ can be reduced to achieving the same task for $L(P_i,Q_j)$, where $P_i$ ($1\leq i\leq s$) and $Q_j$ ($1\leq j\leq r$) denote the connected components of $P$ and $Q$, respectively (see \Cref{cor:disconnectedreg}).
Moreover, in \Cref{subsect:AlexDuality}, we determine all minimal prime ideals of the ideal $L(P,Q)$ (\Cref{prop:minprimes} and \Cref{cor:minprimes}). This allows us to completely characterize those pairs of posets $P$ and $Q$ such that the ideal $L(P,Q)$ is unmixed (\Cref{prop:dualclass}). 
The unmixed case is of particular importance since, under the assumption that $P$ is connected, unmixedness of $L(P,Q)$ is equivalent to the Alexander dual of $L(P,Q)$ coinciding with $L(Q,P)$ (up to switching indices of the variables), see \cite[Corollary~1.2 (2)]{HQS}.
In \Cref{sect:SourceTarget}, we study the behavior of the graded and total Betti numbers of $L(P,Q)$, when we restrict the poset $Q$ and $P$, respectively, to a subposet (see \Cref{cor:restrTarget} and \Cref{lem:extension}).

In \Cref{sect:AlgInv}, we focus on the study of the Castelnuovo-Mumford regularity, the projective dimension and the length of the linear strand of $L(P,Q)$.
We first  provide sharp lower and upper bounds for the Castelnuovo-Mumford regularity and the projective dimension of $L(P,Q)$ for arbitrary $P$ and $Q$ (see \Cref{cor:easybound} and \Cref{cor:lowerboundreg}).
Under the additional assumption that $L(P,Q)$ is unmixed, we are able to generalize Corollary 3.3. from \cite{EHM} and to provide the following precise formulas for the Castelnuovo-Mumford regularity and the projective dimension of $L(P,Q)$. 
In the following \namecref{thm:regformula}, $\ac(Q)$ denotes the maximal cardinality of an antichain in $Q$.
\begin{restatable*}{theorem}{Regularity}\label{thm:regformula}
 Let $P$ and $Q$ be posets such that $L(P,Q)$ is unmixed. Let further $s$ and $r$ be the number of connected components of $P$ and $Q$, respectively. Then
 \begin{itemize}
  \item[{\em(a)}] $\reg \KK[P,Q]=\ac(Q)\cdot (|P|-s)+s-1$.
  \item[{\em(b)}] $\pdim \KK[P,Q]=\ac(P)\cdot (|Q|-r)+s(r-1)+1$.
 \end{itemize}
\end{restatable*}

In the last part of \Cref{sect:AlgInv}, we are interested in the length of the linear strand of $L(P,Q)$.
Depending on the existence of certain subposets of $Q$, we give sharp lower bounds for this length (\Cref{prop:linear}). 
Coming back to our original motivation that a lot of interesting classes of ideals can be obtained as regular quotients of the ideals $L(P,Q)$ for special posets $P$ and $Q$ (see \cite[Section 3]{FGH}), we apply the previously obtained results to initial ideals of determinantal ideals and to initial ideals of $2$-minors of a symmetric matrix. 

In \Cref{sect:charact}, our aim is to characterize when the ideal $L(P,Q)$ has a certain algebraic property. 
One of our main results in this direction (\Cref{cor:charlinres}) classifies those pairs of posets $P$ and $Q$ such that  the ideal $L(P,Q)$ has a linear resolution.
\begin{restatable*}{theorem}{Polymatroidal}\label{cor:charlinres}
	Let $P$ and $Q$ be posets. Then the following conditions are equivalent:
	\begin{enumerate}
		\item[{\em(a)}] ${L(P,Q)}^k$ is weakly polymatroidal, for each positive integer $k$.
		\item[{\em(b)}] $L(P,Q)$ has linear quotients.
		\item[{\em(c)}] $L(P,Q)$ has a linear resolution.
		\item[\emph{(d)}] $\beta_{1,|P|+i}(L(P,Q)) = 0$ for $i \neq 1$.
		\item[{\em(e)}] $P$ is an antichain or $Q$ is a chain.
	\end{enumerate}
\end{restatable*}
We refer the reader to \Cref{sect:charact} for unexplained terminology.
Similarly, \Cref{cor:CM} characterizes those posets such that $\KK[P,Q]$ is Cohen-Macaulay.
\begin{restatable*}{theorem}{CohenMacaulay}\label{cor:CM}
	Let $P$ and $Q$ be posets such that $|Q|\geq 2$. Then the following conditions are equivalent:
	\begin{enumerate}
		\item[{\em(a)}] $\Delta(P,Q)$ is (pure) shellable.
		\item[{\em(b)}] $\KK[P,Q]$ is Cohen-Macaulay.
		\item[{\em(c)}] $\KK[P,Q]$ satisfies Serre's condition $(S_r)$ for some $r\geq 2$.
		\item[{\em(d)}] $\KK[P,Q]$ is sequentially Cohen-Macaulay.
		\item[{\em(e)}] $P$ is connected, and, in addition,  $P$ is a chain or $Q$ is an antichain.
	\end{enumerate}
\end{restatable*}
In the above theorem, $\Delta(P,Q)$ is the simplicial complex whose Stanley-Reisner ideal is $L(P,Q)$.
\Cref{cor:charlinres} and \Cref{cor:CM} imply that for $L(P,Q)$ to have some desirable algebraic property, it is essentially necessary that either $P$ or $Q$ is a chain, see \Cref{rem:LPcase}.

Next, in \Cref{cor:Gor} and \Cref{cor:Buchsbaum} we provide similar results for $\KK[P,Q]$ being Gorenstein and Buchsbaum (but not Cohen-Macaulay), respectively.
Moreover, in \Cref{prop:golod} we provide a characterization of those pairs of posets $P$ and $Q$ such that $\KK[P,Q]$ has the Golod property. 
In \Cref{sect:questions}, we collect some open problems and questions.

\section{Notations and conventions}

\subsection{Partial orders}

In this section, we give a short introduction to posets and their combinatorics. For more information on this topic we refer the reader to \cite{StanleyEnumerative}. 
Throughout this paper, we set $[n] := \set{1, \dotsc, n}$ for $n \in \NN$ and consider this set with the total order $1 < 2 < \dotsb < n$. So as a poset, $[n]$ is a chain with $n$ elements. 
Let $P$ be a partially ordered set (\emph{poset}, for short) on the ground set $\{p_1,\ldots,p_m\}$. For elements $p\leq q$ in $P$, we denote by $[p,q]$ the closed interval $\{u\in P~:~ p\leq u \leq q\}$, endowed with the partial order induced by $P$. Similarly, we use $(p,q)$ to denote the open interval $\{u\in P~:~ p< u < q\}$. One says that $p$ \emph{covers} $q$ if there is no $u\in P$ with $u\neq p,q$ such that $p< u<q$. 
To a poset $P$ one associates its \emph{Hasse diagram}, which is the directed graph on vertex set $V(P)=\{p_1,\ldots, p_m\}$ and with the set of edges $E(P)=\{(p_i,p_j)~:~p_j\mbox{ covers }p_i\}$.  Usually, the directions of the edges in a  Hasse diagram are omitted and just indicated by placing elements above than the elements they cover, see \ref{fig} for examples. 
We call a poset $P$ \emph{connected} if its Hasse diagram is connected as a graph. The posets corresponding to a single Hasse diagram in \ref{fig} are connected, whereas the poset, whose Hasse diagram is the union of all three Hasse diagrams in \ref{fig} is disconnected. 
Any disconnected poset $P$ can be written as a disjoint union of inclusion-maximal connected subposets, and these (uniquely determined) subposets are called the \emph{connected components} of $P$. 
If for two elements $p,q\in P$ there exists a smallest upper bound, then we denote it by $p\vee q$ and call it the \emph{join} of $p$ and $q$. 
An \emph{antichain} of a poset $P$ is a subset $S$ of $P$ such that elements in $S$ are pairwise incomparable. We use $\ac(P)$ to denote the maximal cardinality of an antichain in $P$. 
A subset $C$ of a poset $P$ is called a \emph{chain}, if any two elements of $C$ are comparable in $P$. The \emph{length} of a (finite) chain $C$ is equal to $|C|-1$. 
The so-called \emph{order complex} $\Delta(P)$ of a poset $P$ is the abstract
simplicial complex on vertex set $P$, whose $i$-dimensional faces are the chains of $P$ of length $i$. 
Given simplicial complexes $\Delta$ and $\Gamma$, their \emph{join} is the simplicial complex $\Delta\ast \Gamma=\{F\cup G~:~F\in \Delta,\;G\in \Gamma\}$. 

We now provide some definitions that are central for this article. 
Let $P$ and $Q$ be two (finite) posets.
A map $\phi: P \to Q$ is called \emph{order-preserving} (or \emph{isotone}) if for all $a,b \in P$ with $a\geq b$ it holds that $\phi(a) \geq \phi(b)$.
We write $\Hom(P,Q)$ for the set of order-preserving maps from $P$ to $Q$.
To each $\phi \in \Hom(P,Q)$, we associate the monomial
\[ u_\phi := \prod_{p \in P} x_{p,\phi(p)} \]
in the polynomial ring $S_{P,Q} := \KK[x_{p,q} \colon p \in P, q \in Q]$, where $\KK$ is an arbitrary field.
Following \cite{FGH}, we further set
\[ L(P,Q) := \left( u_\phi \colon \phi \in \Hom(P,Q) \right) \subseteq S_{P,Q} \]
and $\KK[P,Q] := S_{P,Q} / L(P,Q)$.
We denote by $\Delta(P,Q)$ the simplicial complex on the vertex set $P \times Q$, whose Stanley-Reisner ring is $\KK[P,Q]$.

\subsection{Minimal free resolutions}\label{sect:Alg}

In the following, let $\KK$ be a fixed arbitrary field.
We use $S$ to denote an arbitrary polynomial ring over $\KK$.
All polynomial rings over $\KK$ are endowed with the fine grading, i.e., if $S=\KK[x_1, \dotsc, x_n]$, then the variable $x_{i}$ has degree $\mathbf{e}_{i} \in \NN^{n}$. Here, $\mathbf{e}_{i}$ denotes the unit vector, whose $i$\textsuperscript{th} entry equals $1$. 
Further, for any monomial ideal $I$ in a polynomial ring $S$, we write $G(I)$ for its minimal set of monomial generators.

Associated to $S/I$ is a unique (up to isomorphism)  \emph{minimal
free multigraded resolution} of $S/I$ of the form 
$$0\rightarrow\bigoplus_{\mathbf{a} \in \NN^n}S(-\mathbf{a})^{\beta_{p,\mathbf{a}}^S(S/I)}\rightarrow\cdots\rightarrow\bigoplus_{\mathbf{a} \in \NN^n}
S(-\mathbf{a})^{\beta_{1,\mathbf{a}}^S(S/I)}\rightarrow S\rightarrow0$$,
where $p\leq n$ and $S(-\mathbf{a})$ is the $S$-module obtained by
shifting the degrees of $S$ by $\mathbf{a} \in \NN^n$. The numbers $\beta_{i,\mathbf{a}}^S(S/I)$, which are called \emph{multigraded Betti numbers} of $S/I$ 
equal the number of minimal generators of multidegree $\mathbf{a}$ in the
$i$\textsuperscript{th} syzygy module. The \emph{projective dimension} of $S/I$, denoted $\pdim_S S/I$, is equal to $p$. 

Moreover, for $i \in \NN$ and $\mathbf{a} \in \NN^n$, the multigraded Betti numbers are given also by
\[ \beta_{i,\mathbf{a}}^S(S/I) = \dim_\KK \Tor_i^S(S/I,\KK)_\mathbf{a}.\]
 Similarly, for $i\in \NN$ and $j\in \NN$, the numbers 
\[\beta_{i,j}^S(S/I)=\sum_{\substack{\mathbf{a}\in \NN^n\\ \sum_{k}a_k=j}}\beta_{i,\mathbf{a}}^S(S/I)\]
are called the \emph{graded Betti numbers} and $\beta_i^S(S/I)=\sum_{j\in \NN}\beta_{i,j}^S(S/I)$ denotes the $i$\textsuperscript{th} total Betti number of $S/I$. 

The \emph{Castelnuovo-Mumford regularity} of $S/I$ is defined to be 
\[
\reg_S S/I=\max \{j-i:\beta_{i,j}^S(S/I)\neq 0\}.
\]

Suppose that all the elements of 
$G(I)$ have the same degree $d$. Then $I$ is said to have a \emph{linear resolution} if
for all $i\in\NN$, $\beta_{i,j}^S(S/I)=0$ for all $j\neq{i+d-1}$.

The \emph{multigraded Betti numbers} of $I$ are defined as $\beta_{i,\mathbf{a}}^S(I)=\beta_{i+1,\mathbf{a}}^S(S/I)$ for $i\in \NN$ and $\mathbf{a}\in \NN^n$. Consequently, one has 
$\pdim^S I=\pdim^S S/I-1$ and $\reg^S I=\reg^S S/I+1$.
 
If it is clear from the context in which ring we are working, we will omit $S$ from the notation of the Betti numbers, projective dimension and regularity, and just write $\beta_{i,j}(S/I)$, $\pdim S/I$ and $\reg S/I$.

\section{Auxiliary results}\label{sect:BasicProp}
In this section, our aim is to deduce some useful properties of the ideals $L(P,Q)$, which will be used in the proofs of the following sections. 

\subsection{Reduction to the connected components}\label{sect:Reduction}
\newcommand{\ab}{\mathbf{a}}
\newcommand{\bb}{\mathbf{b}}

In the following,  we consider posets $P$ and $Q$ with the connected components $P_1,\ldots,P_s$ and $Q_1,\ldots,Q_r$, respectively. The aim of this section is to show that the computation of the Castelnuovo-Mumford regularity and the projective dimension of  $\KK[P,Q]$ can be reduced to the ones of $\KK[P_i,Q_j]$, for $1\leq i\leq s$, $1\leq j\leq r$. For this we need the following general formulas for monomial ideals in disjoint sets of variables.
The second part of the next proposition already appeared in Corollary 2.2 and Corollary 2.3 of \cite{JK}. The statements concerning the Castelnuovo-Mumford regularity and the projective dimension in both parts are also shown in Lemma 3.2 and Lemma 2.2, respectively, of \cite{Hoa}.  
Nevertheless, as both parts are proven similarly, we include a complete proof of the proposition.

\begin{proposition}\label{prop:disjoint}
Let $S = \KK[x_1, \dotsc,x_n, y_1\dotsc, y_m]$ be a polynomial ring. Let $I$ and $J$ be two monomial ideals, such that the elements in $G(I)$ and $G(J)$ involve only the variables $x_i$ ($1\leq i\leq n$) and $y_i$ ($1\leq i\leq m$), respectively.
\begin{enumerate}
\item[{\em(a)}] For $i\geq 1$, $\ab \in \NN^n, \bb \in \NN^m$ the following holds:
	\begin{align*}
		\beta^S_{i, (\ab,\bb)} ( S / IJ) &= \sum_{\substack{j,k \geq 1 \\ (j-1)+(k-1)=i-1}} \beta^S_{j,\ab}(S/I) \cdot \beta^S_{k,\bb}(S/J) \\
		\pdim_S S/IJ &= \pdim_S S/I + \pdim_S S/J - 1\\
		\reg_S S/IJ &= \reg_S S/I + \reg_S S/J + 1.
	\end{align*}
\item[{\em(b)}] For $i \geq 0$ and $\ab \in \NN^n, \bb \in \NN^m$ the following holds:
	\begin{align*}
		\beta^S_{i, (\ab,\bb)} ( S /(I+J)) &= \sum_{\substack{j,k \geq 0 \\j+k=i}} \beta^S_{j,\ab}(S/I) \cdot \beta^S_{k,\bb}(S/J) \\
		\pdim_S S/(I+J) &= \pdim_S S/I + \pdim_S S/J\\
		\reg_S S/(I+J) &= \reg_S S/I + \reg_S S/J.
	\end{align*}
\end{enumerate}
\end{proposition}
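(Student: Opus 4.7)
The plan is to reduce both parts to a single Künneth-type lemma. Write $S_1 = \KK[x_1, \dotsc, x_n]$ and $S_2 = \KK[y_1, \dotsc, y_m]$, so that $S = S_1 \otimes_\KK S_2$ as multigraded $\KK$-algebras. The key lemma I will establish is: if $P_\bullet$ is a minimal multigraded free resolution of an $S_1$-module $M$ and $Q_\bullet$ is such a resolution of an $S_2$-module $N$, then $P_\bullet \otimes_\KK Q_\bullet$ is a minimal multigraded free resolution of $M \otimes_\KK N$ over $S$. Each $P_i \otimes_\KK Q_j$ is automatically a free $S$-module; exactness is the Künneth formula over the field $\KK$ (applicable since $P_\bullet$ and $Q_\bullet$ have homology concentrated in degree $0$); and minimality holds because the differentials of $P_\bullet \otimes_\KK Q_\bullet$ have entries in the graded maximal ideal of $S$.

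For part (b), I apply this lemma with $M = S_1/I$ and $N = S_2/J$, observing that $S_1/I \otimes_\KK S_2/J \cong S/(I+J)$ as multigraded $S$-modules. Reading off multigraded Betti numbers from the tensor product of the minimal resolutions of $S_1/I$ and $S_2/J$ yields the product formula, once I invoke the standard fact that Betti numbers are preserved under the flat base changes $S_1 \hookrightarrow S$ and $S_2 \hookrightarrow S$. The formulas for $\pdim$ and $\reg$ then follow by maximizing $i = j+k$ and $(|\ab|-j)+(|\bb|-k)$, respectively, over nonvanishing terms in the product, where the latter quantity splits additively precisely because the multidegree $(\ab,\bb)$ splits.

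For part (a), I apply the same lemma to $M = I$ and $N = J$, after first establishing the identification $I \otimes_\KK J \cong IJ$ of multigraded $S$-modules via $u \otimes v \mapsto uv$. This is the one place where disjointness of the variable sets is used in an essential way: every monomial in $IJ$ decomposes uniquely as a product of its $x$-part (which lies in $I$) and its $y$-part (which lies in $J$), so the multiplication map is a bijection on monomial $\KK$-bases, with $S$-linearity being immediate. The Betti number formula in (a) then drops out by reindexing via $\beta^S_{i,\ab}(I) = \beta^S_{i+1,\ab}(S/I)$ and the analogous identity for $IJ$; the condition $(j-1)+(k-1)=i-1$ in the displayed sum corresponds exactly to the Künneth indexing applied to resolutions of $I$ and $J$. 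The $\pdim$ and $\reg$ formulas fall out with an extra $\pm 1$ coming from the shift between a module and its ideal of first syzygies. I expect the main obstacle to be the careful bookkeeping of multidegrees and homological indices, especially in matching Betti numbers of $I$ over $S_1$ with those of $S/I$ over $S$; the core Künneth-plus-minimality argument is essentially formal.
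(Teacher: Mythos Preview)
Your proposal is correct and takes a genuinely different route from the paper. The paper argues via the lcm-lattice: it invokes the Gasharov--Peeva--Welker formula expressing $\beta_{i,\ab}(S/\mathfrak{b})$ as reduced homology of the order complex of an open interval in $L_{\mathfrak{b}}$, then identifies $L_{IJ}$ and $L_{I+J}$ as (variants of) the product $L_I \times L_J$, applies Walker's results on order complexes of products of posets to recognize the relevant complex as a join (respectively a suspension of a join), and finally uses the K\"unneth formula for the homology of a join. Your argument bypasses all of this by working directly with the tensor product $P_\bullet \otimes_{\KK} Q_\bullet$ of minimal free resolutions over $S_1$ and $S_2$; the K\"unneth theorem over the field $\KK$ gives exactness, minimality is immediate from the differentials landing in the maximal ideal, and the identifications $S_1/I \otimes_{\KK} S_2/J \cong S/(I+J)$ and $I \otimes_{\KK} J \cong IJ$ supply the two cases. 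Your approach is more elementary (no lcm-lattice, no poset topology, no external reference to Walker) and in fact more general, since it applies verbatim to arbitrary graded ideals in disjoint variable sets rather than only monomial ones. The paper's approach, on the other hand, stays within the combinatorial-topological framework used elsewhere in the article and makes the underlying simplicial structure visible, which is in keeping with its overall style.
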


\begin{proof}
For any monomial ideal $\mathfrak{b} \subseteq S$, we write $L_\mathfrak{b}$ for its lcm-lattice.
Recall that $\beta^S_{i, \ab}(S/\mathfrak{b}) = \dim_\KK \tilde{H}_{i-2}(\Delta((\hat{0},\ab)_{L_\mathfrak{b}}); \KK)$ for $\ab \in L_\mathfrak{b}\setminus \{\hat{0}\}$ and $i \geq 1$, (see \cite[Theorem~2.1]{GPW}).
\begin{asparaenum}[(a)]
\item If $\ab=0$ or $\bb=0$, both sides of the formula for the Betti numbers evaluate to $0$, so we are done in this case. Now, let $\ab\neq 0$ and $\bb\neq 0$.
It is easy to see that $L_{IJ} = (L_I \setminus \{\hat{0}\}) \times (L_J \setminus \{\hat{0}\}) \cup \{\hat{0}\}$, since the minimal generators of $I$ and $J$ are defined on disjoint sets of variables.
Further, for $\ab \in L_I, \bb \in L_J$, it follows from \cite[Theorem~5.1~(c)]{Walker} that $\left|\Delta((\hat{0},\ab \vee \bb)_{L_{IJ}})\right| \approx \left|\Delta((\hat{0},\ab)_{L_{I}})\right| * \left|\Delta((\hat{0},\bb)_{L_{J}})\right|$.
Here, $\left|\cdot\right|$ denotes the geometric realization of the order complex, ``$\approx$'' means homeomorphic and ``$*$'' denotes the join.
Now the claim follows from a straightforward computation:
\begin{align*}
\beta^S_{i, (\ab,\bb)} ( S / IJ) &= \dim_\KK \tilde{H}_{i-2}(\Delta((\hat{0},\ab \vee \bb)_{L_{IJ}}); \KK) \\
&= \dim_\KK \tilde{H}_{i-2}(\Delta((\hat{0},\ab)_{L_{I}}) * \Delta((\hat{0},\bb)_{L_{J}});\KK) \\
&= \dim_\KK \bigoplus_{(j-1)+(k-1)=i-1} \tilde{H}_{j-2}(\Delta((\hat{0},\ab)_{L_{I}});\KK) \otimes_{\KK} \tilde{H}_{k-2}(\Delta((\hat{0},\bb)_{L_{J}});\KK) \\
&= \sum_{(j-1)+(k-1)=i-1} \beta^S_{j,\ab}(S/I) \cdot \beta^S_{k,\bb}(S/J).
\end{align*}
In the third line, we use the algebraic K\"unneth formula, cf. \cite[Theorem 3B.5]{Hat}. For this, note that the simplicial chain complex of a join is just the tensor product of the chain complexes of the factors. 
The formulas for the projective dimension and the regularity are an immediate consequence of the formula for the Betti numbers. 
\item Note that $L_{I+J} = L_I \times L_J$. 
By \cite[Theorem 5.1 (d)]{Walker} it holds that 
\[ |\Delta((\hat{0},\ab \vee \bb)_{L_{I+J}})| \approx \begin{cases}
	\mathrm{susp}(|\Delta((\hat{0},\ab)_{L_{I}})| * |\Delta((\hat{0},\bb)_{L_{J}})|) &\text{ if } \ab \neq 0, \bb \neq 0; \\
	|\Delta((\hat{0},\ab)_{L_{I}})| &\text{ if } \bb = 0; \\
	|\Delta((\hat{0},\bb)_{L_{J}})| &\text{ if } \ab = 0,
\end{cases} \]
where $\mathrm{susp}(\cdot)$ denotes the suspension.
The formula for the Betti numbers is trivially true if $i = 0$, so we may assume that $i \geq 1$.
Further, note that if $\ab \neq 0$ and $\bb \neq 0$, then in the right-hand side of the formula for the Betti numbers, only the terms with $j,k\geq 1$ contribute.
In this case, the claim follows analogously to the first part.
On the other hand, if $\bb = 0$, then
\begin{align*}
\beta^S_{i, (\ab,\bb)} ( S / (I+J)) &= \dim_\KK \tilde{H}_{i-2}(\Delta((\hat{0},\ab \vee \bb)_{L_{I+J}}); \KK) = \dim_\KK \tilde{H}_{i-2}(\Delta((\hat{0},\ab)_{L_{I}});\KK) \\
	&= \beta^S_{i, \ab} ( S / I) = \beta^S_{i, \ab} ( S / I)\beta^S_{0, \bb} ( S / J) \\
	&= \sum_{\substack{j,k \geq 0 \\j+k=i}} \beta^S_{j,\ab}(S/I) \cdot \beta^S_{k,\bb}(S/J)
\end{align*}
and similar for $\ab = 0$.
\end{asparaenum}
\end{proof}

We can now state the mentioned reduction to connected components.  In the following, given an ideal $I$ we write $\Min(I)$ for the set of its minimal prime ideals. 

\begin{corollary}\label{cor:disconnectedreg}
Let $P$ and $Q$ be posets with  the connected components $P_1,\ldots,P_s$ and $Q_1,\ldots,Q_r$, respectively. Then
\[ L(P,Q) = \prod_{i=1}^s L(P_i, Q) = \prod_{i=1}^s \sum_{j=1}^r L(P_i, Q_j). \]
In particular, the following holds:
\begin{itemize}
\item[{\em(a)}] $\displaystyle{\pdim \KK[P,Q] = \sum_{\substack{1\leq i\leq s\\ 1\leq j\leq r}} \pdim \KK[P_i, Q_j] - (s-1)}.$
\item[{\em(b)}] $\displaystyle{\reg \KK[P,Q] = \sum_{\substack{1\leq i\leq s\\ 1\leq j\leq r}} \reg \KK[P_i, Q_j] + (s-1)}.$
\item[{\em(c)}] $\Min(L(P,Q)) =\bigcup_{i=1}^s\Min(L(P_i,Q))$.
\end{itemize}
\end{corollary}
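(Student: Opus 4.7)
The plan is to first establish the two ideal identities and then derive (a), (b), (c) as direct applications of \Cref{prop:disjoint} together with an easy argument about minimal primes in disjoint variables.

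For the first identity $L(P,Q) = \prod_{i=1}^s L(P_i,Q)$, I would use that a map $\phi \colon P \to Q$ is order-preserving if and only if each restriction $\phi|_{P_i}$ is order-preserving (there are no order relations between elements of distinct components), so $\Hom(P,Q) = \prod_{i=1}^s \Hom(P_i,Q)$. Under this bijection, $u_\phi = \prod_{i=1}^s u_{\phi|_{P_i}}$, and since the $P_i$ are disjoint, these are products in disjoint sets of variables. It follows immediately that the generators of $L(P,Q)$ are exactly the products of generators of the $L(P_i,Q)$, which gives the claimed factorization. For the second identity $L(P_i,Q) = \sum_{j=1}^r L(P_i,Q_j)$, the key (and most delicate) observation is that since $P_i$ is connected, any $\phi \in \Hom(P_i,Q)$ has image contained in a single component $Q_j$: indeed, any two $p,p' \in P_i$ are connected by a sequence of cover relations, and each cover in $P_i$ is sent to a pair of comparable elements in $Q$, hence to elements in the same component of $Q$. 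Therefore $\Hom(P_i,Q) = \bigsqcup_{j=1}^r \Hom(P_i,Q_j)$, which yields the identity on the level of generators.

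For (a) and (b), I would apply \Cref{prop:disjoint} in two stages. Observe that the variable sets of $L(P_i,Q_j)$ for fixed $i$ and varying $j$ are pairwise disjoint (since the $Q_j$ are disjoint), so iterating \Cref{prop:disjoint}(b) gives $\pdim \KK[P_i,Q] = \sum_{j=1}^r \pdim \KK[P_i,Q_j]$ and $\reg \KK[P_i,Q] = \sum_{j=1}^r \reg \KK[P_i,Q_j]$. Next, the variable sets of $L(P_i,Q)$ for varying $i$ are pairwise disjoint (since the $P_i$ are disjoint), so iterating \Cref{prop:disjoint}(a) on the product $L(P,Q) = \prod_{i=1}^s L(P_i,Q)$ gives $\pdim \KK[P,Q] = \sum_{i=1}^s \pdim \KK[P_i,Q] - (s-1)$ and $\reg \KK[P,Q] = \sum_{i=1}^s \reg \KK[P_i,Q] + (s-1)$. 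Combining the two stages yields the formulas in (a) and (b).

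For (c), I would argue as follows. Since $L(P,Q) \subseteq L(P_i,Q)$ for each $i$, every minimal prime of $L(P_i,Q)$ contains $L(P,Q)$, and conversely $V(L(P,Q)) = V(\prod_i L(P_i,Q)) = \bigcup_{i=1}^s V(L(P_i,Q))$, so every minimal prime of $L(P,Q)$ is contained in $\bigcup_i \Min(L(P_i,Q))$. Since $L(P_i,Q)$ is a squarefree monomial ideal involving only the variables indexed by $P_i \times Q$, each of its minimal primes is a monomial prime generated by variables from that set. Primes belonging to different indices $i \ne i'$ therefore involve disjoint sets of variables and are pairwise incomparable, so no element of $\Min(L(P_i,Q))$ can properly contain one from $\Min(L(P_{i'},Q))$. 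Hence all the primes in $\bigcup_i \Min(L(P_i,Q))$ are actually minimal over $L(P,Q)$, giving the claimed equality.

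The main obstacle is the connectedness step showing that every $\phi \in \Hom(P_i,Q)$ lands in a single component of $Q$; once that is in place, everything else follows from \Cref{prop:disjoint} and elementary considerations about squarefree monomial ideals in disjoint variables.
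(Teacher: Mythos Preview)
Your proof is correct and follows essentially the same approach as the paper: the two ideal identities via the bijections on $\Hom$-sets, then \Cref{prop:disjoint} applied in two stages for (a) and (b). For (c) your argument is a slight variant: the paper instead observes that since the $L(P_i,Q)$ live in disjoint variable sets, their product equals their intersection, and then uses that each $L(P_i,Q)$ is radical (hence the intersection of its minimal primes) to read off $\Min(L(P,Q))$ directly; your direct incomparability argument achieves the same thing.
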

\begin{proof}
The first equality follows from the fact that there is a bijection between
$\Hom(P_1 \cup \dotsb \cup P_s, Q)$ and $\Hom(P_1,Q) \times \dotsb \times \Hom(P_s,Q)$.
Further, as $P_i$ is connected for each $i$ it holds that
\[ \Hom(P_i, Q_1 \cup \dotsb \cup Q_r) = \Hom(P_i,Q_1) \cup \dotsb \cup \Hom(P_i,Q_r),\]
which implies the second equality.
Now the formulas for the projective dimension and the regularity are immediate consequences of \Cref{prop:disjoint}.
Here we use that the regularity and the projective dimension do not change if we consider the ideals $L(P_i,Q_j) \subseteq S_{P_i,Q_j}$ in the larger ring $S_{P,Q}$.

For the last formula, note that the ideals $L(P_i,Q)$ are on disjoint sets of variables. Hence it holds that
\[ L(P,Q) = \prod_{i=1}^s L(P_i, Q) = \bigcap_{i=1}^s L(P_i, Q). \]
Moreover, $L(P,Q)$ and all $L(P_i,Q)$ are radical ideals and thus equal to the intersection of their minimal primes. So the last formula is clear.
\end{proof}

\begin{example}\label{ex:antichain}
As an application of \Cref{cor:disconnectedreg}, we consider the situation that one of the posets $P$ and $Q$ is an antichain.
\begin{asparaenum}[(a)]
\item Assume that $P=\{p_1,\ldots,p_s\}$ is an antichain and $Q$ is an arbitrary poset. Then $L(P,Q) = \prod^s_{i=1} L(\{p_i\}, Q)$.
Since $L(\{p_i\}, Q)$ is generated by the variables $x_{p_i,q}$ for $q \in Q$, it follows that $\pdim \KK[\{p_i\}, Q] = | Q|$ and $\reg \KK[\{p_i\}, Q] = 0$.
Hence, by \Cref{cor:disconnectedreg} it holds that
\begin{align*}
\pdim \KK[P,Q] &= |P| (|Q| - 1) +  1\mbox{ and} \\
\reg \KK[P,Q] &= |P| - 1.
\end{align*}
Note that as $L(P,Q)$ is generated in degree $|P|$, the regularity attains the minimal possible value, which forces $L(P,Q)$ to have a linear resolution.
\item Consider the case that $P$ is connected and $Q=\{q_1,\ldots,q_r\}$ is an antichain. Then $L(P,Q) = \sum_{j=1}^r L(P, \{q_j\})$.
Since $L(P, \{q_j\})$ is a principal ideal, whose generator is of degree $|P|$, it follows that $\pdim \KK[P,\{q_j\}] = 1$ and $\reg \KK[P,\{q_j\}] = |P |-1$. Then \Cref{cor:disconnectedreg} implies that
\begin{align*}
\pdim \KK[P,Q] &= |Q| \mbox{ and} \\
\reg \KK[P,Q] &= |Q|(|P| - 1).
\end{align*}
In fact, in this case $\KK[P,Q]$ is a complete intersection (see paragraph before \Cref{cor:Gor} for the definition of complete intersection). 
\item Finally, consider the case that $P$ is arbitrary with connected components $P_1,\ldots,P_s$ and $Q$ is an antichain.
We know from (b) that
$\pdim \KK[P_i,Q] = |Q|$ and $\reg \KK[P_i,Q] = |Q|(|P_i| - 1)$ for $1\leq i\leq s$.
Hence, using \Cref{cor:disconnectedreg}, we conclude that
\begin{align*}
\pdim \KK[P,Q] &= s\cdot (|Q|-1)+1 \mbox{ and} \\
\reg \KK[P,Q] &= |Q|(|P| - s)+s-1.
\end{align*}
\end{asparaenum}
\end{example}

\subsection{Minimal primes, Alexander duality and the unmixed case}\label{subsect:AlexDuality}
%
%
\newcommand{\pp}{\mathfrak{p}}
\newcommand{\MHom}{\operatorname{MHom}}
For certain classes of posets $P$ and $Q$, the Alexander dual of $L(P,Q)$ satisfies the relation $L(P,Q)^\vee = L(Q,P)^\tau$.
Here, the superscript $\tau$ indicates that $L(Q,P)^\tau$ is obtained from $L(Q,P)$ by switching the indices of the variables.
Indeed, in \cite{EHM} it was shown that this holds if one of the posets is a chain and, recently, in \cite{HQS} a complete characterization of the pairs $(P,Q)$ of posets satisfying $L(P,Q)^{\vee}=L(Q,P)^\tau$ was provided.
In these cases, it is shown that the minimal primes of $L(P,Q)$ are in one-to-one correspondence with order-preserving maps $Q \to P$. In the following, we aim at a description of the minimal prime ideals of $L(P,Q)$ for arbitrary poset $P$ and $Q$. For this, one needs to consider multi-valued order-preserving maps from $Q$ to $P$ that we discuss now. 
Since every monomial prime ideal $\pp \subseteq S_{P,Q}$ is generated by a set of variables, we can identify $\pp$ with the set
\[\psi_\pp := \set{(q,p) \in Q \times P \with x_{p,q} \in \pp} \subseteq Q \times P.\]
We consider $\psi_\pp$ as a multi-valued map from $Q$ to $P$. For $\psi \subseteq Q \times P$ and $q\in Q$ we set $\psi(q) := \set{p\in P\with (q,p) \in \psi}$ and 
$ \pp_\psi := (x_{p,q} \with (q,p) \in \psi)$.
\begin{definition}
	Let $\MHom(Q,P)$ be the collection of sets $\psi \subseteq Q \times P$, which satisfy the following two conditions:
	\begin{enumerate}[(a)]
		\item $\psi(q) \neq \emptyset$ for all $q \in Q$, i.e., every $q \in Q$ has at least one image under $\psi$, and
		\item for all $q,q' \in Q$ with $q < q'$, there exist $p \in \psi(q)$ and $p' \in \psi(q')$ with $p < p'$. 
	\end{enumerate}
	We call elements of $\MHom(Q,P)$ \emph{order-preserving multivalued maps} from $Q$ to $P$.
\end{definition}
Note there is a natural inclusion $\Hom(Q,P) \hookrightarrow \MHom(Q,P)$. 
\begin{proposition}\label{prop:minprimes}
	Let $P$ and $Q$ be posets. Then the following hold:
	\begin{enumerate}
		\item[{\em(a)}] For every $\pp \in \Min(L(P,Q))$, there exists a multivalued map $\psi \in \MHom(Q,P)$ such that $\pp = \pp_\psi$.
		\item[{\em(b)}] Let $\psi \in \MHom(Q,P)$.
		Then $L(P,Q) \subseteq \pp_\psi$ if and only if for every $\phi \in \Hom(P,Q)$, there exists an element $p \in P$ such that $p \in \psi(\phi(p))$.
	\end{enumerate}
\end{proposition}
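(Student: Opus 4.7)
The plan is to prove (b) first by direct unpacking and then to derive (a) by invoking the squarefree-monomial structure of $L(P,Q)$. For (b), I would observe that $\pp_\psi$ is a monomial prime generated by variables, so a monomial lies in $\pp_\psi$ iff one of the variables in its support does. Applied to the generator $u_\phi = \prod_{p \in P} x_{p,\phi(p)}$, this says $u_\phi \in \pp_\psi$ exactly when there exists some $p \in P$ with $(\phi(p),p) \in \psi$, i.e.\ $p \in \psi(\phi(p))$. Ranging over all $\phi \in \Hom(P,Q)$ gives the stated equivalence.

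For (a), since $L(P,Q)$ is a squarefree monomial ideal, every minimal prime $\pp$ is itself a monomial prime, and hence of the form $\pp = \pp_{\psi_\pp}$ with $\psi_\pp := \{(q,p) : x_{p,q} \in \pp\}$. So the task reduces to checking that $\psi_\pp$ satisfies the two conditions of $\MHom(Q,P)$. Condition (a) of $\MHom$ is straightforward: for any $q \in Q$, the constant map $\phi_q(p) \equiv q$ is trivially order-preserving, so $u_{\phi_q} = \prod_{p \in P} x_{p,q}$ lies in $L(P,Q) \subseteq \pp$, and primality forces some variable $x_{p,q}$ to lie in $\pp$, i.e.\ $\psi_\pp(q) \neq \emptyset$.

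The main obstacle is condition (b) of $\MHom$, and the key trick is to feed $\pp$ a well-chosen two-level order-preserving map. Given $q < q'$ in $Q$, I would form the smallest up-set $A \subseteq P$ containing $\psi_\pp(q)$, namely $A := \{p \in P : p \geq p^* \text{ for some } p^* \in \psi_\pp(q)\}$, and define $\phi \colon P \to Q$ by $\phi(p) = q'$ for $p \in A$ and $\phi(p) = q$ otherwise. Because $A$ is an up-set and $q < q'$, this two-level map is order-preserving, so $u_\phi \in \pp$, and primality yields some $p^* \in P$ with $x_{p^*,\phi(p^*)} \in \pp$. If $p^* \notin A$ then $\phi(p^*) = q$ and hence $p^* \in \psi_\pp(q) \subseteq A$, a contradiction; so $p^* \in A$, which forces $\phi(p^*) = q'$ and therefore $p^* \in \psi_\pp(q')$. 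By construction of $A$, some $p \in \psi_\pp(q)$ satisfies $p \leq p^*$, and setting $p' := p^*$ yields the pair required by condition (b). It is worth remarking that minimality of $\pp$ is never actually invoked in this argument — the proof in fact shows that $\psi_\pp \in \MHom(Q,P)$ for \emph{every} monomial prime $\pp$ with $L(P,Q) \subseteq \pp$.
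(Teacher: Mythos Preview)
Your proof is correct and follows the same overall strategy as the paper's: establish (b) by direct unpacking, then for (a) verify the two $\MHom$ conditions by feeding $\pp$ first the constant maps and then a carefully chosen two-valued map. The only (cosmetic) difference is in that last step: you build your two-valued $\phi$ from the \emph{up-set} of $\psi_\pp(q)$ and argue directly, whereas the paper builds it from the \emph{down-set} of $\psi(q')$ and argues by contradiction --- these are dual constructions, and your closing remark that minimality of $\pp$ is never actually used is correct and applies equally to the paper's argument.
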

\begin{proof}
This proof is mostly an adaption of the proof of \cite[Proposition~1.5]{FGH}.
We first prove part (b).
\begin{asparaenum}
\item[(b)] Let $\psi \in \MHom(Q,P)$ and $\phi \in \Hom(P,Q)$.
	Then $u_\phi \in \pp_\psi$ if and only if there exists $p \in P$ and $q \in Q$, such that $x_{p,q} \mid u_\phi$ and $x_{p,q} \in \pp_\psi$.
	In other words, $q = \phi(p)$ and $p \in \psi(q)$.
	Hence, $u_\phi \in \pp_\psi$ if and only if there exists a $p \in P$ such that $p \in \psi(\phi(p))$.

\item[(a)] Let $\pp \in \Min(L(P,Q))$. 
	We consider $\psi := \psi_{\pp}=\set{(q,p) \with x_{p,q} \in \pp}$. Since, by construction, $\pp_{\psi}=\pp$, it suffices to prove that $\psi \in \MHom(Q,P)$.
	
	First, we show that $\psi(q) \neq \emptyset$ for all $q \in Q$.
	For $q \in Q$ let $\phi_q: P \to Q$ be the constant map with image $q$.
	As $u_{\phi_q} \in L(P,Q) \subseteq \pp$, we deduce from part (b) that there exists an element $p \in P$ with $p \in \psi(\phi(p)) = \psi(q)$.
	
	It remains to verify that $\psi$ is order-preserving.
	Assume to the contrary that there exist $q, q' \in Q$ such that $q < q'$ and $p \nleq p'$ for all $p \in \psi(q)$, $p' \in \psi(q')$.
	Consider the map $\phi: P \to Q$ defined by
	\[ \phi(p) := 
	\begin{cases}
	q &\text{ if } \exists p'\in\psi(q')\text{ with } p \leq p',\\
	q' &\text{ else.}
	\end{cases}
	\]
	This map is clearly order-preserving. Now, if $\phi(p) = q$, then $p \notin \psi(q) = \psi(\phi(p))$ by construction.
	On the other hand, if $\phi(p) = q'$, then $p \notin \psi(q') = \psi(\phi(p))$.
	This yields a contradiction to part (b).
\end{asparaenum}
\end{proof}
\noindent We list some immediate consequences of the preceding proposition.
The following formula for the height generalizes part (a) of Corollary 1.2 in \cite{HQS}.
\begin{corollary}\label{cor:minprimes}
	Let $P$ and $Q$ be posets. Then the following hold:
	\begin{enumerate}[(a)]
		\item[{\em(a)}] Every $\pp \in \Min(L(P,Q))$ has height at least $|Q|$, and there exists a prime $\pp \in \Min(L(P,Q))$ with $\height(\pp) = |Q|$.
		\item[{\em(b)}] Let $\psi \in \Hom(Q,P)$. Then $\pp_\psi \in \Min(L(P,Q))$ if and only if for every $\phi \in \Hom(P,Q)$, the map $\psi \circ \phi$ has a fixed point.
	\end{enumerate}
	In particular, $\height (L(P,Q))=|Q|$ and $\dim \KK[P,Q]=|Q|\cdot(|P|-1)$.
\end{corollary}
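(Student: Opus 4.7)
The plan is to combine Proposition 3.5 with some straightforward height bookkeeping. First I note that for any $\psi \subseteq Q \times P$ the prime $\pp_\psi$ is minimally generated by exactly $|\psi|$ distinct variables of $S_{P,Q}$, so $\height \pp_\psi = |\psi|$. For part (a), I would invoke Proposition 3.5(a) to write an arbitrary $\pp \in \Min(L(P,Q))$ as $\pp_\psi$ with $\psi \in \MHom(Q,P)$. Since condition (a) in the definition of $\MHom$ forces $|\psi(q)| \geq 1$ for every $q \in Q$, we get $|\psi| \geq |Q|$ immediately, giving the lower bound on the height.

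For the existence of a minimal prime of height exactly $|Q|$, I would exhibit an explicit example: pick any $p_0 \in P$ and let $\psi_0 \in \Hom(Q,P) \subseteq \MHom(Q,P)$ be the constant map $q \mapsto p_0$. For every $\phi \in \Hom(P,Q)$ the composition $\psi_0 \circ \phi$ is the constant map $p \mapsto p_0$, which has $p_0$ itself as a fixed point. Hence Proposition 3.5(b) gives $L(P,Q) \subseteq \pp_{\psi_0}$. To upgrade this to minimality, I would argue as follows: pick a minimal prime $\pp' \in \Min(L(P,Q))$ with $\pp' \subseteq \pp_{\psi_0}$. By the first part of (a) already proved, $\height \pp' \geq |Q| = \height \pp_{\psi_0}$. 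Since $S_{P,Q}$ is a polynomial ring, hence catenary, and one prime contained in another of the same height must coincide with it, we conclude $\pp' = \pp_{\psi_0}$, so $\pp_{\psi_0} \in \Min(L(P,Q))$.

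For part (b) I would simply specialize Proposition 3.5(b) to single-valued $\psi \in \Hom(Q,P) \subseteq \MHom(Q,P)$: in this case $\psi(\phi(p))$ is a singleton, and the condition "$p \in \psi(\phi(p))$" becomes the fixed-point condition $\psi(\phi(p)) = p$. This gives the containment $L(P,Q) \subseteq \pp_\psi$ precisely when $\psi \circ \phi$ has a fixed point for every $\phi \in \Hom(P,Q)$. The promotion from containment to minimality is identical to the argument in part (a): $\height \pp_\psi = |Q|$ is the minimum possible height of a minimal prime, so any minimal prime nested inside $\pp_\psi$ has the same height and therefore equals $\pp_\psi$.

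The two displayed consequences are then immediate: $\height L(P,Q) = \min\{\height \pp : \pp \in \Min(L(P,Q))\} = |Q|$ by part (a), and $\dim \KK[P,Q] = \dim S_{P,Q} - \height L(P,Q) = |P|\cdot|Q| - |Q| = |Q|(|P|-1)$. The only step that requires any care is the minimality upgrade in parts (a) and (b); everything else is direct unpacking of Proposition 3.5, and I do not anticipate any genuine obstacle.
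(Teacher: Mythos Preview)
Your proposal is correct and follows essentially the same approach as the paper: both use Proposition~3.5(a) for the height lower bound, exhibit a constant map $\psi_0 \in \Hom(Q,P)$ to produce a prime of height exactly $|Q|$ containing $L(P,Q)$, and then argue minimality from the fact that no strictly smaller prime over $L(P,Q)$ can exist once the height lower bound is known. Your treatment of the minimality upgrade (via catenarity) is slightly more explicit than the paper's one-line remark, but the underlying argument is identical.
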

\begin{proof}
\begin{asparaenum}[(a)]
\item Let  $\pp \in \Min(L(P,Q))$. By \Cref{prop:minprimes} (a), there exists a multivalued $\psi \in \MHom(Q,P)$ such that $\pp = \pp_\psi$. As $\psi(q) \neq \emptyset$ for all $q \in Q$, there are at least $|Q|$ variables contained in $\pp$.
Hence the height of $\pp$ is at least $|Q|$.

On the other hand, any constant map $\psi \in \Hom(Q,P)$ satisfies the condition of part (b) of \Cref{prop:minprimes}, so $L(P,Q) \subseteq \pp_\psi$. Since $\pp_\psi$ has height $|Q|$, it needs to be a minimal prime ideal and the claim follows. 
\item This is clear from part (b) of \Cref{prop:minprimes} and part (a).
\end{asparaenum}

The formulas for the height and the dimension are clear from part (a) and the formula $\dim(\KK[P,Q])=(|P|\cdot |Q|)-\height (L(P,Q))$.
\end{proof}
%
%
%
Next, we recall the characterization of those posets $P$ and $Q$, which satisfy $L(P,Q)^\vee = L(Q,P)^\tau$, as provided in \cite{HQS}.
\begin{figure}[h!]
\centering
\subfigure[]{\begin{tikzpicture}[scale = 0.6,every node/.style=punkt]
	\draw (-0.2,1) node {} -- (0.5,0) node {} -- (0.5,0) node {} -- (0.5,1) node {};
    \draw (0.5,0) node {} -- (1.2,1) node {};
\end{tikzpicture}\label{fig:rooted1}}\qquad\qquad
\subfigure[]{\begin{tikzpicture}[scale = 0.6,every node/.style=punkt]
\draw (0,0) node {} -- (-0.7,-1) node {};
\draw (0,0) node {} -- (0.7,-1) node {} -- (0,-2) node {};
\draw (0.7,-1) node {} -- (1.4,-2) node {};
\end{tikzpicture}\label{fig:rooted2}}\qquad\qquad
\subfigure[]{ \begin{tikzpicture}[scale = 0.6,every node/.style=punkt]
\draw (0,0) node {} -- (0,1) node {} -- (1,0) node {} -- (1,1) node {};
\end{tikzpicture}\label{fig:rooted3}}
\caption{}
\label{fig}
\end{figure}
Crucial for this characterization is the notion of a (co-)rooted poset 
, which we now recall. 
A poset $P$ is called \emph{rooted} if for any two incomparable elements $p_1, p_2\in P$, there is no element $p\in P$ such that $p > p_1, p_2$.
Similarly, $P$ is called \emph{co-rooted} if for any two incomparable elements $p_1, p_2 \in P$, there is no element $p \in P$ such that $p < p_1, p_2$.
It is easy to see that a connected poset $P$ is rooted (resp. co-rooted) if and only if its Hasse diagram (resp. the Hasse diagram of the dual of $P$) is a rooted tree as a directed graph (where edges are oriented from the smaller to the bigger element along a cover relation).
Moreover, note that an arbitrary poset is rooted if and only if each of its connected components is rooted. For example, consider the Hasse diagrams shown in \Cref{fig}.
The poset in \Cref{fig:rooted1} is rooted, but not co-rooted, whereas the poset in \Cref{fig:rooted2} is co-rooted, but not  rooted. The poset in \Cref{fig:rooted3} and the union of the posets in \Cref{fig:rooted1} and \Cref{fig:rooted2} are neither rooted nor co-rooted.
We are now in the position to state the mentioned characterization from \cite{HQS}.
\begin{proposition}[\cite{HQS}, Corollary 1.5]\label{prop:dualclass}
	Let $P$ and $Q$ be posets.
	Then $L(P,Q)^\vee=L(Q,P)^\tau$ if and only if $P$ or $Q$ is connected and one of the following conditions holds:
	\begin{itemize}
		\item[{\em(a)}] Both, $P$ and $Q$, are rooted.
		\item[{\em(b)}] Both, $P$ and $Q$, are co-rooted.
		\item[{\em(c)}] $P$ is connected and $Q$ is a disjoint union of chains.
		\item[{\em(d)}] $Q$ is connected and $P$ is a disjoint union of chains.
		\item[{\em(e)}] $P$ or $Q$ is a chain.
	\end{itemize}
\end{proposition}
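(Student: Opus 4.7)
The plan is to convert the Alexander duality condition $L(P,Q)^\vee = L(Q,P)^\tau$ into a statement about the minimal primes of $L(P,Q)$ and then apply the description from \Cref{cor:minprimes}. Recall that the generators of $L(Q,P)^\tau$ are the monomials $\prod_{q \in Q} x_{\psi(q),q}$ for $\psi \in \Hom(Q,P)$, each of degree $|Q|$, while the Alexander dual of $L(P,Q)$ is generated by $\prod_{x \in \mathfrak{p}} x$ as $\mathfrak{p}$ ranges over $\Min(L(P,Q))$. Hence the desired equality is equivalent to $\Min(L(P,Q)) = \{\mathfrak{p}_\psi \colon \psi \in \Hom(Q,P)\}$, which by \Cref{cor:minprimes} amounts to two conditions: \textbf{(A)} $L(P,Q)$ is unmixed of height $|Q|$, i.e., no truly multivalued $\psi \in \MHom(Q,P)\setminus\Hom(Q,P)$ gives a minimal prime, and \textbf{(B)} for every $\psi \in \Hom(Q,P)$ and every $\phi \in \Hom(P,Q)$, the composition $\psi \circ \phi \colon P \to P$ has a fixed point.

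For sufficiency, I would verify that each case (a)--(e), together with the connectedness hypothesis, implies both (A) and (B). When $P$ or $Q$ is a chain, condition (B) follows from the classical fact that every order-preserving self-map of a finite chain has a fixed point. For rooted (resp. co-rooted) posets, one exploits the tree structure of the Hasse diagram: the unique maximum (resp. minimum) of each connected component furnishes a fixed point of $\psi \circ \phi$, possibly after an inductive descent along the tree. Cases (c) and (d) reduce to (e) on each chain component using \Cref{cor:disconnectedreg} and the connectedness of the other poset. Condition (A) is then established in each case by showing that any candidate multivalued $\psi$ admits an order-preserving single-valued selection, so that only the single-valued maps contribute minimal primes.

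The main obstacle lies in the necessity direction: if none of (a)--(e) holds, or both $P$ and $Q$ are disconnected, then one must produce an explicit algebraic obstruction. The approach is contrapositive. If $P$ or $Q$ fails to be both rooted and co-rooted, it contains incomparable elements with a common upper and a common lower bound, and this configuration can be used to construct a pair $(\psi,\phi)$ whose composition swaps these bounds and thus has no fixed point, violating (B). If instead both $P$ and $Q$ are disconnected and neither is a chain, one assembles a genuinely multivalued minimal prime by combining appropriate partial maps on the connected components, violating (A). Carrying out this case analysis exhaustively and matching each failure mode to a concrete witness is the delicate step, which in the paper is imported from the detailed treatment in \cite{HQS}.
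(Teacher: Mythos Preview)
The paper does not prove this proposition at all; it is stated as a direct citation of \cite[Corollary~1.5]{HQS}. So there is no in-paper argument to compare against, and your proposal goes further than the paper by attempting an actual proof sketch built on \Cref{prop:minprimes} and \Cref{cor:minprimes}.

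Your reduction to conditions (A) and (B) is correct and is indeed the strategy behind the result in \cite{HQS}. However, the sketch as written has genuine gaps. In the sufficiency direction, your treatment of the rooted case is imprecise: a connected rooted poset has a unique \emph{minimum}, not maximum, and the phrase ``inductive descent along the tree'' is not an argument for the fixed-point property of $\psi\circ\phi$; one really needs the specific combinatorial lemma from \cite{HQS} here. Your justification of (A) (``any candidate multivalued $\psi$ admits an order-preserving single-valued selection'') is also not the right statement: the existence of such a selection does not by itself rule out that the larger prime is minimal.

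In the necessity direction your case split is not set up correctly. Saying ``$P$ or $Q$ fails to be both rooted and co-rooted'' does not capture the negation of (a)--(e); for instance, $P$ rooted but not co-rooted together with $Q$ co-rooted but not rooted already falls outside (a)--(e), yet neither poset need contain incomparable elements with both a common upper and a common lower bound. The actual case analysis in \cite{HQS} is finer than what you outline. You acknowledge this yourself in the last sentence, so your proposal is best read as a roadmap that correctly identifies the mechanism (minimal primes and the fixed-point criterion) but defers the substantive work to the cited reference---which is exactly what the present paper does as well.
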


Recall that an ideal $I$ in a polynomial ring $S$ is called \emph{unmixed} if all associated prime ideals of $I$ have the same height (see \cite[A.5]{HH}).
Similarly, $S/I$ is unmixed if $I$ is unmixed.
Note that if $P$ and $Q$ satisfy $L(P,Q)^\vee=L(Q,P)^\tau$, then, as $L(P,Q)^\vee$ is generated in a single degree, it follows that $L(P,Q)$ is unmixed. 
It hence should not be surprising that there is a close relation between the posets $P$ and $Q$ satisfying $L(P,Q)^\vee=L(Q,P)^\tau$ and those, for which $L(P,Q)$ is unmixed. 
The next result provides a complete characterization of pairs of posets falling into this latter class.

\begin{proposition}\label{pro:unmixed}
 Let $P$, $Q$ be posets. Then $L(P,Q)$ is unmixed if and only if one of the following conditions holds:
 \begin{itemize}
  \item[{\em(a)}] Both, $P$ and $Q$, are rooted.
  \item[{\em(b)}] Both, $P$ and $Q$, are co-rooted.
  \item[{\em(c)}] $P$ or $Q$ is a disjoint union of chains.
 \end{itemize}
Moreover, if $P$ or $Q$ is connected and $L(P,Q)$ is unmixed, then $L(P,Q)^\vee=L(Q,P)^\tau$.
\end{proposition}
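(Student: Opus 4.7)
The plan is to prove both directions by reducing to the case where $P$ and $Q$ are connected and then combining the result with \Cref{prop:dualclass}. First I would use \Cref{cor:disconnectedreg} together with the disjoint-variable observation to show that $L(P,Q)$ is unmixed if and only if each $L(P_i, Q_j)$ is unmixed for every pair of connected components: since $L(P_i, Q) = \sum_j L(P_i, Q_j)$ is a sum over disjoint sets of variables, its minimal primes are sums $\sum_j \pp_j$ for $\pp_j \in \Min(L(P_i, Q_j))$, with heights adding, so the minimum height $|Q|$ is attained uniformly iff each summand attains $|Q_j|$. The conditions (a), (b), (c) descend to and ascend from connected components (e.g.\ $P$ is rooted iff each $P_i$ is rooted, and $P$ is a disjoint union of chains iff each $P_i$ is a chain), so it suffices to treat the connected case.

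For the ``if'' direction in the connected case, each of (a), (b), or (c)---where (c) reduces to ``$P$ or $Q$ is a chain'' under connectedness---implies one of the conditions (a)--(e) of \Cref{prop:dualclass}. Hence $L(P,Q)^\vee = L(Q,P)^\tau$, and as the right-hand side is generated in the single degree $|Q|$, $L(P,Q)$ is unmixed. The main work lies in the ``only if'' direction in the connected case, where I would establish the following key lemma: if $P$ and $Q$ are connected, $P$ is not co-rooted and $Q$ is not rooted, then $L(P,Q)$ is not unmixed (with a symmetric statement swapping rootedness and co-rootedness). To prove it, pick incomparable $p_1, p_2 \in P$ with a common lower bound $p$ and incomparable $q_1, q_2 \in Q$ with a common upper bound $q$, and construct $\psi \in \MHom(Q,P)$ by setting $\psi(q_1) = \{p_1\}$, $\psi(q_2) = \{p_2\}$, $\psi(q) = \{p_1, p_2\}$, and extending to the remaining elements of $Q$ using $p$ and other suitable choices to maintain order-preservation. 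The pairs $(q,p_1)$ and $(q,p_2)$ are then both essential for the covering condition of \Cref{prop:minprimes}(b): essentiality is witnessed by maps $\phi \in \Hom(P,Q)$ with $\phi(p_1) = q$ (respectively $\phi(p_2) = q$) chosen so that $\phi$ misses $\psi$ everywhere else. The hard part will be extending $\psi$ consistently over all of $Q$---especially over elements above $q$ or incomparable with $q_1, q_2, q$---while keeping $\pp_\psi$ minimal of height strictly greater than $|Q|$.

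Granting the key lemma, the remaining case analysis goes as follows. First I would verify the auxiliary fact that a connected poset is a chain if and only if it is both rooted and co-rooted, by a shortest-path argument between incomparable elements in the Hasse diagram. Hence, if $P$ and $Q$ are connected non-chains, each must fail rootedness or co-rootedness, and the key lemma together with its dual rules out every mismatched combination; the only possibilities left are both rooted or both co-rooted, giving (a) or (b). Lifting back to arbitrary $P, Q$ via the reduction, if no component of $P$ and no component of $Q$ is a non-chain then (c) already holds; otherwise, pick non-chain components $P_{i_0}$ and $Q_{j_0}$, and the connected case applied to that pair yields, say, both rooted. A further case analysis over the pairs $(P_i, Q_{j_0})$ and $(P_{i_0}, Q_j)$---using again that a connected rooted and co-rooted poset is a chain to exclude the ``both co-rooted'' branch---forces every $P_i$ and every $Q_j$ to be rooted, giving (a) globally; the symmetric argument yields (b). Finally, the moreover part is immediate: if $P$ or $Q$ is connected and $L(P,Q)$ is unmixed, then by the main part one of (a)--(c) holds, which combined with the connectedness hypothesis matches a condition of \Cref{prop:dualclass} and yields $L(P,Q)^\vee = L(Q,P)^\tau$.
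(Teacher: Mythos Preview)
Your overall architecture matches the paper's: reduce to connected components, handle the connected case via the rooted/co-rooted dichotomy, then lift back. Two differences are worth noting. First, the paper decomposes only $P$ into components (using \Cref{cor:disconnectedreg}(c), which gives $\Min(L(P,Q)) = \bigcup_i \Min(L(P_i,Q))$), whereas you decompose both $P$ and $Q$; your finer reduction is correct but unnecessary. Second, and more importantly, for the connected ``only if'' direction the paper does not construct anything: it simply invokes the proof of \cite[Theorem~1.4]{HQS}, which already contains the statements ``$P_i$ rooted non-chain $\Rightarrow$ ($L(P_i,Q)$ unmixed $\Leftrightarrow$ $Q$ rooted)'' and analogues. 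Your ``key lemma'' is essentially a plan to reprove that part of \cite{HQS} from scratch.

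That plan is the right idea, but the step you flag as ``the hard part''---extending $\psi$ over all of $Q$ while keeping $\pp_\psi$ a \emph{minimal} prime containing $L(P,Q)$---is precisely where all the content lies, and your sketch does not yet resolve it. In particular, to certify minimality you need, for \emph{every} element $(q',p') \in \psi$, a witness $\phi \in \Hom(P,Q)$ that meets $\psi$ only at $(q',p')$; your sketch produces witnesses only for the two distinguished pairs $(q,p_1)$ and $(q,p_2)$, and it is not clear your extension of $\psi$ can be arranged so that such witnesses exist globally. So as written this is a genuine gap, though one you have correctly located. A minor additional slip: in your lifting argument, the negation of ``no component of $P$ and no component of $Q$ is a non-chain'' does not guarantee non-chain components in \emph{both} posets; you should first dispose of the case where one of $P,Q$ is a disjoint union of chains (giving (c) directly) before picking $P_{i_0}$ and $Q_{j_0}$.
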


\begin{proof}
%
%
Let $P_1,\ldots,P_s$ denote the connected components of $P$.
Then, by \Cref{cor:disconnectedreg}, we have $\Min(L(P,Q))=\bigcup_{i=1}^s\Min(L(P_i,Q))$. 
Therefore, $L(P,Q)$ is unmixed if and only if $L(P_i,Q)$ is unmixed for $i=1,\ldots,s$.
If $P_i$ is a chain for all $i$, then $L(P_i,Q)^\vee = L(Q, P_i)^\tau$ by \cite[Proposition~1.2]{FGH}.
Hence, $L(P, Q)$ is unmixed by the previous observation. 
Now consider the case that $P_i$ is not a chain for some $i$.
Using that by \Cref{cor:minprimes} $L(P,Q)$ is unmixed if and only if each element of $\Min(L(P,Q))$ has height $|Q|$, we deduce from the proof of \cite[Theorem 1.4]{HQS} the following statements:
\begin{itemize}
	\item If $P_i$ is rooted, then $L(P_i,Q)$ is unmixed if and only if $Q$ is rooted.
	\item If $P_i$ is co-rooted, then $L(P_i,Q)$ is unmixed if and only if $Q$ is co-rooted.
	\item If $P_i$ is neither rooted nor co-rooted, then $L(P_i,Q)$ is unmixed if and only if $Q$ is a disjoint union of chains. (Though the proof of \cite[Theorem 1.4]{HQS} only shows that if $Q$ is a disjoint union of chains, then $L(P_i,Q)^\vee = L(Q, P_i)^\tau$, this already implies that $L(P_i, Q)$ is unmixed, by the discussion preceding this proposition.) 
\end{itemize}
Note that the only posets which are both rooted and co-rooted are disjoint unions of chains.
In conclusion, it follows that $L(P,Q)$ is unmixed if and only if one of the following cases applies:
\begin{enumerate}
	\item $P_i$ is a chain for all $i$, or
	\item $P_i$ is rooted for all $i$ and $Q$ is rooted, or
	\item $P_i$ is co-rooted for all $i$ and $Q$ is co-rooted, or
	\item $Q$ is a disjoint union of chains.
\end{enumerate}
The first case is clearly equivalent to $P$ being a disjoint union of chains.
Further, since a poset is rooted (resp. co-rooted) if all its connected components are rooted (resp. co-rooted), the claimed classification of the unmixed case follows.

The ``Moreover''-statement follows from the just proven part of the proposition combined with \Cref{prop:dualclass}.
%
%
\end{proof}

\subsection{Restricting the target and the source}\label{sect:SourceTarget}
In this section, given two posets $P$ and $Q$, we consider subposets $P'\subseteq P$ and $Q'\subseteq Q$.
It is now natural to ask how algebraic invariants of $\KK[P',Q']$ are related to the corresponding invariants of $\KK[P,Q]$.
In the following, we are going to discuss this question.

\begin{proposition}\label{cor:restrTarget}
	Let $Q$ be a poset and $Q' \subseteq Q$ be a subposet.
	For all $i$ and all multidegrees $\ab \in \NN^{P \times Q'} \subseteq \NN^{P \times Q}$ the following holds:
	\[ \beta_{i,\ab}^{S_{P,Q}}(\KK[P,Q]) = \beta_{i,\ab}^{S_{P,Q'}}(\KK[P,Q']). \]
	In particular,
	\[ \beta_{i,j}^{S_{P,Q}}(\KK[P,Q]) \geq \beta_{i,j}^{S_{P,Q'}}(\KK[P,Q']) \]
	for all $i$ and $j$.\\
	Moreover, $\reg \KK[P,Q]\geq \reg\KK[P,Q']$ and $\pdim \KK[P,Q]\geq \pdim \KK[P,Q']$.
\end{proposition}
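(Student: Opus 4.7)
The plan is to reduce the multigraded statement to a purely combinatorial identification of open intervals in the associated lcm-lattices, using the formula of Gasharov--Peeva--Welker
\[ \beta^{S}_{i,\mathbf{a}}(S/I) = \dim_\KK \tilde{H}_{i-2}(\Delta((\hat 0,\mathbf{a})_{L_I});\KK) \]
already exploited in the proof of \Cref{prop:disjoint}. Applied to both sides, the multigraded equality $\beta_{i,\mathbf{a}}^{S_{P,Q}}(\KK[P,Q]) = \beta_{i,\mathbf{a}}^{S_{P,Q'}}(\KK[P,Q'])$ reduces to showing that the open intervals $(\hat 0,\mathbf{a})_{L_{L(P,Q)}}$ and $(\hat 0,\mathbf{a})_{L_{L(P,Q')}}$ coincide as posets whenever $\mathbf{a}$ is supported in $P\times Q'$. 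Multidegrees $\mathbf{a}$ which do not belong to either lcm-lattice contribute zero on both sides, so nothing is lost.

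For the lattice identification, the key observation is that the minimal generators of $L(P,Q)$ whose support lies inside $P \times Q'$ are precisely the monomials $u_\phi$ with $\phi(p) \in Q'$ for every $p \in P$; since $Q'$ carries the induced order from $Q$, these are exactly the elements of $\Hom(P,Q')$, i.e.\ the generating set of $L(P,Q')$. Any element of $L_{L(P,Q)}$ that divides $\mathbf{a}$ is the lcm of some minimal generators $u_{\phi_1},\ldots,u_{\phi_k}$, each of whose support must then be contained in the support of $\mathbf{a}$, which lies in $P\times Q'$; hence each $\phi_j \in \Hom(P,Q')$. This shows that every element of $(\hat 0,\mathbf{a})_{L_{L(P,Q)}}$ already lies in $L_{L(P,Q')}$, and the reverse inclusion is immediate. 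Since the order in both lcm-lattices is just divisibility, the two open intervals agree as posets, their order complexes are identical, and the multigraded equality follows.

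For the remaining inequalities, sum the multigraded equality over multidegrees $\mathbf{a} \in \NN^{P\times Q'}$ with $|\mathbf{a}|=j$ and discard the nonnegative contributions from multidegrees with some entry outside $P\times Q'$:
\[ \beta_{i,j}^{S_{P,Q}}(\KK[P,Q]) \;\geq\; \sum_{\substack{|\mathbf{a}|=j\\ \mathbf{a}\in \NN^{P\times Q'}}} \beta_{i,\mathbf{a}}^{S_{P,Q}}(\KK[P,Q]) \;=\; \sum_{\substack{|\mathbf{a}|=j\\ \mathbf{a}\in \NN^{P\times Q'}}} \beta_{i,\mathbf{a}}^{S_{P,Q'}}(\KK[P,Q']) \;=\; \beta_{i,j}^{S_{P,Q'}}(\KK[P,Q']). \]
The inequalities $\reg \KK[P,Q] \geq \reg \KK[P,Q']$ and $\pdim \KK[P,Q] \geq \pdim \KK[P,Q']$ then follow at once, since both invariants are the maxima over pairs $(i,j)$ with $\beta_{i,j}\neq 0$ of $j-i$ and $i$, respectively. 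I do not expect serious obstacles: the whole argument pivots on the identification of the open intervals, which is routine once one notices that a monomial in the lcm-lattice lying below $\mathbf{a}$ has support confined to $P\times Q'$; the rest is bookkeeping.
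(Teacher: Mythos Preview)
Your argument is correct, but it follows a different route from the paper's. The paper proceeds via Hochster's formula for the Stanley--Reisner complex: removing an element $q_0\in Q$ amounts to adding the variables $x_{p,q_0}$ to the ideal, so $\Delta(P,Q')$ is the induced subcomplex of $\Delta(P,Q)$ on the vertex set $P\times Q'$, and Hochster's formula immediately gives the multigraded equality since the relevant induced subcomplexes coincide. Your approach instead uses the Gasharov--Peeva--Welker formula and identifies the open intervals below $\mathbf{a}$ in the two lcm-lattices; the key step, that a minimal generator $u_\phi$ of $L(P,Q)$ with support in $P\times Q'$ is automatically a generator of $L(P,Q')$, is exactly what makes this work. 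Both arguments hinge on the same combinatorial fact and are of comparable length; the paper's version has the minor advantage of yielding the explicit quotient description $\KK[P,Q']\cong S_{P,Q}/\bigl(L(P,Q)+(x_{p,q_0}:p\in P)\bigr)$, while yours has the virtue of reusing the machinery already set up in \Cref{prop:disjoint}.
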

\begin{proof}
	It is enough to show the claim for $Q' = Q \setminus \{q_0\}$ for some element $q_0 \in Q$.
	In this case, the maps $P \to Q'$ are in one-to-one correspondence with the maps $P \to Q$, which do not hit $q_0$.
	Hence it holds that
	\[ \KK[P,Q'] \cong S_{P,Q} / \left(L(P,Q) + (x_{p,q_0} \colon p \in P)\right). \]
	Now the claim follows from Hochster's formula. Indeed, $\Delta(P,Q')$ is the induced subcomplex of $\Delta(P,Q)$ on vertex set $\{x_{p,q}~:~p\in P, q\in Q'\}$ and hence any induced subcomplex of $\Delta(P,Q')$ is also an induced subcomplex of $\Delta(P,Q)$.
\end{proof}

\begin{proposition}\label{lem:extension}
	Let $P$ be a poset and $P' \subseteq P$ be a subposet, such that every homomorphism $\phi: P' \to Q$ extends to a homomorphism $P \to Q$.
	Then
	\[ \beta_{i}^{S_{P,Q}}(\KK[P,Q]) \geq \beta_{i}^{S_{P',Q}}(\KK[P',Q]) \]
	for all $i$. 
\end{proposition}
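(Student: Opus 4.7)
The plan is to transfer the comparison of total Betti numbers to a comparison of lcm-lattices and then invoke a theorem of Gasharov--Peeva--Welker. Define the multiplicative operation $\pi$ on monomials of $S_{P,Q}$ by $\pi(x_{p,q}) := x_{p,q}$ if $p \in P'$ and $\pi(x_{p,q}) := 1$ if $p \in P\setminus P'$, extended multiplicatively. Then $\pi$ sends monomials to monomials and commutes with taking least common multiples and with the divisibility order. For every $\phi \in \Hom(P,Q)$ one computes directly
\[ \pi(u_\phi) = \prod_{p\in P'} x_{p,\phi(p)} = u_{\phi|_{P'}},\]
which is a generator of $L(P',Q)$ since $\phi|_{P'}\in \Hom(P',Q)$.

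By the extension hypothesis, every $\phi'\in \Hom(P',Q)$ arises as $\phi|_{P'}$ for some $\phi\in \Hom(P,Q)$, so $\pi$ maps the atoms of $L_{L(P,Q)}$ surjectively onto the atoms of $L_{L(P',Q)}$. Since $\pi$ preserves lcm and every element of an lcm-lattice is the join of the atoms below it, $\pi$ induces a surjective, join-preserving lattice map $\tilde\pi\colon L_{L(P,Q)} \twoheadrightarrow L_{L(P',Q)}$ with $\tilde\pi(\hat 0)=\hat 0$. At this point I would apply the theorem of Gasharov--Peeva--Welker (\cite{GPW}): combined with the formula
\[ \beta_{i,\ab}^S(S/\mathfrak b) = \dim_\KK \tilde H_{i-2}(\Delta((\hat 0,\ab)_{L_{\mathfrak b}});\KK) \]
already used in the proof of \Cref{prop:disjoint}, the existence of such a surjection of lcm-lattices forces $\beta_i^{S_{P,Q}}(\KK[P,Q]) \geq \beta_i^{S_{P',Q}}(\KK[P',Q])$ for every $i$, which (after the usual shift between Betti numbers of $S/I$ and those of $I$) yields the claim.

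The main obstacle is invoking the correct form of the GPW comparison: one needs the version asserting that a surjective join-preserving map of lcm-lattices sending $\hat 0$ to $\hat 0$ (and in particular atoms to atoms) implies a coordinate-wise inequality of total Betti numbers, rather than the more commonly cited isomorphism statement. Once this is accepted, the remaining verifications are immediate: $\tilde\pi$ is well-defined because $\pi$ preserves lcm, is join-preserving for the same reason, and is surjective precisely by the extension hypothesis on $P'\subseteq P$. Note finally that the two ideals live in different polynomial rings, but the Betti numbers of a monomial ideal depend only on its lcm-lattice (and the field $\KK$), so the comparison makes sense across $S_{P,Q}$ and $S_{P',Q}$.
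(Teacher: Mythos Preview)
Your argument is correct but follows a genuinely different route from the paper. The paper proceeds by localization: it sets $R := \KK[P,Q][x_{p,q}^{-1} : p \in P\setminus P']$, observes that $\beta_i^{S_{P,Q}}(\KK[P,Q]) \geq \beta_i(R)$ because multigraded localization can only decrease total Betti numbers (this is \cite[Proposition~2.5]{Ei}), and then identifies $R$ with a Laurent polynomial extension of $\KK[P',Q]$. The extension hypothesis is used precisely at this last step, to see that the image of $L(P,Q)$ in the localized ring is generated by all $u_{\phi'}$ with $\phi'\in\Hom(P',Q)$, not just by those that happen to extend.

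Conceptually the two proofs are doing the same thing---erasing the variables $x_{p,q}$ with $p\notin P'$---but they package the inequality differently. The paper's version is more self-contained: the drop of Betti numbers under localization is a one-line consequence of flatness, and no lcm-lattice machinery is invoked. Your version compresses everything into a single appeal to the Gasharov--Peeva--Welker framework; as you rightly flag, this requires the \emph{surjective} comparison (a join-preserving surjection of lcm-lattices forces $\beta_i(I)\geq\beta_i(J)$), not merely the isomorphism statement. That version is indeed available via the relabeling construction in \cite{GPW}, so your proof goes through. One small advantage of your formulation is that it makes transparent why only \emph{total} Betti numbers are compared: the map $\pi$ collapses multidegrees, so graded information is lost---which matches the remark following the proposition in the paper.
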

\begin{proof}
	Consider the ring $R := \KK[P,Q][x_{p,q}^{-1} ~:~ p \in P \setminus P']$.
	As $R$ is a multigraded localization of $\KK[P,Q]$, it follows from \cite[Proposition~2.5]{Ei} that $\beta^{S_{P,Q}}_{i}(\KK[P,Q]) \geq \beta^{S_{P',Q}}_{i}(R)$ for every $i$. 
	On the other hand, it is easy to see that
	\[ R = S_{P,Q}[x_{p,q}^{-1} ~:~ p \in P \setminus P'] / I, \]
	where $I$ is the ideal generated by the monomials $u_\phi$ for those maps $\phi\in \Hom(P',Q)$ that are restrictions from maps $P \to Q$.
	From our assumption we infer that $I = L(P',Q)$ and hence $R$ is nothing but a Laurent polynomial extension of $\KK[P',Q]$.
	This does not affect the Betti numbers, i.e., we have $\beta^{S_{P',Q}}_{i,\ab}(R)=\beta^{S_{P',Q}}_{i,\ab}(\KK[P',Q]$) and the claim follows from the first part of the proof.
\end{proof}

\begin{remark}
\begin{asparaenum}[(a)]
\item We want ro remark that there exist subposets $P' \subseteq P$, which do not satisfy the condition of the above proposition.
Let $P$ and $Q$ be as in \Cref{figRestr}. 
%
\begin{figure}[h!]
\begin{tikzpicture}[scale = 0.6] 
	\begin{scope}
		\node at (0.5,-2) {$P$};
		\draw (0,0) node[punkt] {} -- ++(-60:1) node[punkt] {} -- ++(60:1) node[punkt] {};
	\end{scope}
	\begin{scope}[shift={(5,0)}]
		\node at (0.5,-2) {$Q$};
		\draw (0,-0.5) node[punkt] {}  ++(1,0) node[punkt] {};
	\end{scope}
\end{tikzpicture}
\caption{}
\label{figRestr}
\end{figure}
Let $P' \subseteq P$ be the antichain containing the two maximal elements of $P$.
Then the identity map $P'\to Q$ cannot be extended to a homomorphism $P \to Q$.
Further, one can compute that $\beta_{1}^{S_{P',Q}}(\KK[P',Q]) = 4$ while $\beta_{1}^{S_{P,Q}}(\KK[P,Q]) =2$,
so the conclusion of the preceding proposition does not hold in this case.
\item Also, note that \Cref{lem:extension} does not yield an inequality for the graded Betti numbers.
In general, $L(P,Q)$ is generated in degree $|P|$ whereas $L(P',Q)$ is generated in degree $|P'| < |P|$. Hence
\[ 0 = \beta_{1,|P'|}^{S_{P,Q}}(\KK[P,Q]) \ngeq \beta_{1,|P'|}^{S_{P',Q}}(\KK[P',Q]) > 0.\]
\end{asparaenum}
\end{remark}

\section{Algebraic invariants}\label{sect:AlgInv}
\subsection{Bounds for the regularity and the projective dimension}
In this section, we are interested in algebraic invariants of $\KK[P,Q]$ for general posets $P$ and $Q$.
%
%
%
Our first result are rough upper and lower bounds
for the projective dimension and the Castelnuovo-Mumford regularity of $\KK[P,Q]$.
\begin{proposition}\label{cor:easybound}
Let $P$ and $Q$ be posets. Then
\begin{itemize}
	\item[{\em(a)}] $|Q| \leq \pdim \KK[P,Q] \leq |P|(|Q|-1)+1$, and
	\item[{\em(b)}] $|P|-1 \leq \reg \KK[P,Q] \leq |Q|(|P|-1)$.
\end{itemize}
\end{proposition}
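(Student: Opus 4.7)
The plan is to prove the four inequalities separately, dispatching the lower bounds from material already in the excerpt and then reducing the upper bounds via \Cref{cor:disconnectedreg} to a comparison with the antichain configurations of \Cref{ex:antichain}.

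For the lower bound $\pdim \KK[P,Q] \geq |Q|$, I would invoke the general inequality $\pdim_S S/I \geq \height I$ together with the computation $\height L(P,Q) = |Q|$ from \Cref{cor:minprimes}. For the lower bound $\reg \KK[P,Q] \geq |P|-1$, observe that any constant map $\phi \colon P \to Q$ is order-preserving, so $u_\phi$ is a minimal generator of $L(P,Q)$ of degree $|P|$; hence $\beta_{1,|P|}(L(P,Q)) \neq 0$, which forces $\reg L(P,Q) \geq |P|$ and therefore $\reg \KK[P,Q] = \reg L(P,Q) - 1 \geq |P|-1$.

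For the upper bounds, the first step is to use \Cref{cor:disconnectedreg} to reduce to the case that $P$ is connected. If $P = P_1 \sqcup \cdots \sqcup P_s$, the additivity formula gives
\[
\pdim \KK[P,Q] = \sum_{i=1}^s \pdim \KK[P_i,Q] - (s-1) \leq \sum_{i=1}^s \bigl(|P_i|(|Q|-1)+1\bigr) - (s-1) = |P|(|Q|-1)+1,
\]
assuming the bound for each connected component; the regularity bound propagates analogously via part (b) of the same corollary. For connected $P$ and arbitrary $Q$, the plan is to compare $\KK[P,Q]$ with the extremal antichain configurations of \Cref{ex:antichain}. The bound $\reg \KK[P,Q] \leq |Q|(|P|-1)$ is approached by noting that when $P$ is connected, $\Hom(P,\bar Q)$ consists only of constant maps (where $\bar Q$ denotes the antichain on the underlying set of $Q$), so $L(P,\bar Q) \subseteq L(P,Q)$ and there is a surjection $\KK[P,\bar Q] \twoheadrightarrow \KK[P,Q]$; combined with $\reg \KK[P,\bar Q] = |Q|(|P|-1)$ from \Cref{ex:antichain}(b), the desired bound should follow from the short exact sequence $0 \to L(P,Q)/L(P,\bar Q) \to \KK[P,\bar Q] \to \KK[P,Q] \to 0$. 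The analogous comparison $L(P,Q) \subseteq L(\bar P, Q)$ (for $\bar P$ the antichain on $P$), together with $\pdim \KK[\bar P, Q] = |P|(|Q|-1)+1$ from \Cref{ex:antichain}(a), handles the $\pdim$ bound.

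The main obstacle will be this last comparison step. Since ideal containments do not automatically yield monotonicity of $\pdim$ or $\reg$, controlling the auxiliary module appearing in the short exact sequence requires a direct combinatorial analysis—most likely via Hochster's formula applied to the Stanley-Reisner complex $\Delta(P,Q)$, exploiting the specific structure of the generators of $L(P,Q)$ as graphs of order-preserving maps—to bound the regularity (respectively the projective dimension) of the kernel.
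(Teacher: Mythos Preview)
Your lower bounds are correct and match the paper's argument exactly. The gap is in the upper bounds: the short exact sequence comparison you sketch is incomplete, as you yourself note, and there is no reason to expect that bounding $\reg$ or $\pdim$ of the auxiliary module $L(P,Q)/L(P,\bar Q)$ (resp.\ $L(\bar P,Q)/L(P,Q)$) is any easier than bounding the invariants of $\KK[P,Q]$ directly. The reduction to connected $P$ is also unnecessary.

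The paper bypasses all of this with a direct argument. The key elementary fact is that for any squarefree monomial ideal $I \subseteq S$ generated in degrees $\geq d$, one has $\pdim S/I \leq n - d + 1$, where $n$ is the number of variables: indeed, $\beta_{i,j}(S/I) \neq 0$ forces $j \geq i + d - 1$ (shifts increase by at least one along the resolution) and $j \leq n$ (squarefreeness, e.g.\ via Hochster's formula), so $i \leq n - d + 1$. Applied with $n = |P|\cdot|Q|$ and $d = |P|$, this gives $\pdim \KK[P,Q] \leq |P|(|Q|-1)+1$ in one line. For the regularity bound, the paper uses Alexander duality: $\reg \KK[P,Q] = \pdim L(P,Q)^\vee$. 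Since every minimal prime of $L(P,Q)$ has height $\geq |Q|$ by \Cref{cor:minprimes}, the dual $L(P,Q)^\vee$ is generated in degrees $\geq |Q|$, and the same elementary fact yields $\pdim S/L(P,Q)^\vee \leq |Q|(|P|-1)+1$, hence $\reg \KK[P,Q] = \pdim L(P,Q)^\vee \leq |Q|(|P|-1)$.
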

\begin{proof}
\begin{asparaenum}[(a)]
\item The lower bound for the projective dimension is immediate from \Cref{cor:minprimes}, because $\pdim\KK[P,Q]\geq \height(L(P,Q))$.
The upper bound for the projective dimension follows from  Hochster's formula, combined with the fact that $L(P,Q)$ is generated in degree $|P|$.

\item Since the ideal $L(P,Q)$ is generated in degree $|P|$, the lower bound for the regularity is obvious. For the upper bound, observe that by  \Cref{cor:minprimes} (a), all minimal primes of $L(P,Q)$ have height at least $|Q|$, which means that the minimal generators of the Alexander dual $L(P,Q)^\vee$ are of degree at least $|Q|$.  
Thus, by the same argument as above together with \cite[Proposition~8.1.8]{HH}, we have that $\reg \KK[P,Q] = \pdim L(P,Q)^\vee \leq |Q|(|P|-1)$.
\end{asparaenum}
\end{proof}

We want to remark that by \Cref{ex:antichain} all four bounds of \Cref{cor:easybound} are sharp.
Nevertheless, taking into account the number of connected components of $P$, the lower bound for the regularity can be improved significantly.
Recall that $\ac(Q)$ denotes the maximal cardinality of an antichain in $Q$.
\begin{proposition}\label{cor:lowerboundreg}
	Let $s$ be the number of connected components of $P$. Then
	\[ \reg \KK[P,Q] \geq \ac(Q)\cdot (|P|-s)+s-1. \]
	If, in addition, $s=1$ and $Q$ contains $a_k$ antichains of cardinality $k$, then
	\[\beta_{k, k|P|}^{S_{P,Q}}(\KK[P,Q]) \geq a_k.\]
\end{proposition}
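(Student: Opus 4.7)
The plan is to reduce both inequalities to the case where the target poset is an antichain, by applying \Cref{cor:restrTarget} to restrict $Q$ to an appropriate subposet.

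\textbf{Regularity bound.} I would pick any antichain $A\subseteq Q$ with $|A|=\ac(Q)$. Combining \Cref{cor:restrTarget} with the explicit formula from \Cref{ex:antichain}(c), applied to the pair $(P,A)$ where $P$ has $s$ components and $A$ is an antichain, gives
\[\reg\KK[P,Q]\;\geq\;\reg\KK[P,A]\;=\;|A|(|P|-s)+s-1\;=\;\ac(Q)(|P|-s)+s-1,\]
which is exactly the desired inequality.

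\textbf{Betti number bound.} Now assume $s=1$, so $P$ is connected, and fix $k$. To each antichain $A\subseteq Q$ of cardinality $k$ I would associate the multidegree $\mathbf{a}_A:=\sum_{p\in P,\,q\in A}\mathbf{e}_{p,q}\in\NN^{P\times Q}$, which is supported on $P\times A\subseteq P\times Q$ and has total degree $k|P|$. By \Cref{cor:restrTarget} applied with $Q'=A$,
\[\beta_{k,\mathbf{a}_A}^{S_{P,Q}}(\KK[P,Q])\;=\;\beta_{k,\mathbf{a}_A}^{S_{P,A}}(\KK[P,A]).\]
Since $P$ is connected and $A$ is an antichain, \Cref{ex:antichain}(b) identifies $L(P,A)$ as the complete intersection generated by the $k$ monomials $u_q=\prod_{p\in P}x_{p,q}$ for $q\in A$, which live in pairwise disjoint sets of variables. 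The Koszul complex on these generators is therefore the minimal multigraded free resolution of $\KK[P,A]$, and its top free module is a single copy of $S_{P,A}(-\mathbf{a}_A)$; in particular $\beta_{k,\mathbf{a}_A}^{S_{P,A}}(\KK[P,A])=1$.

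To finish, I would use that distinct antichains $A$ of size $k$ produce distinct multidegrees $\mathbf{a}_A$ (the support of $\mathbf{a}_A$ recovers $P\times A$, hence $A$), so these multigraded contributions are independent in the coarser graded Betti number. Summing yields
\[\beta_{k,k|P|}^{S_{P,Q}}(\KK[P,Q])\;\geq\;\sum_{\substack{A\subseteq Q\text{ antichain}\\|A|=k}}\beta_{k,\mathbf{a}_A}^{S_{P,Q}}(\KK[P,Q])\;=\;a_k.\]
No real obstacle is expected; the only routine fact is that the Koszul complex of monomials in pairwise disjoint variables gives the minimal resolution of the corresponding quotient. The slightly delicate point to keep in mind is that \Cref{cor:restrTarget} equates only the \emph{multigraded} Betti numbers (not the graded ones) upon restriction, which is precisely why the proof passes through the explicit multidegrees $\mathbf{a}_A$.
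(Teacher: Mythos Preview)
Your proposal is correct and follows essentially the same approach as the paper: restrict $Q$ to an antichain via \Cref{cor:restrTarget}, use that $\KK[P,A]$ is a complete intersection when $P$ is connected, read off the top multigraded Betti number, and sum over the distinct multidegrees coming from different antichains. The only organizational difference is that the paper first reduces the regularity claim to the connected case via \Cref{cor:disconnectedreg} and then deduces it from the Betti number inequality, whereas you handle the regularity directly for general $s$ using \Cref{ex:antichain}(c); both routes are equally valid.
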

\begin{proof} 
By \Cref{cor:disconnectedreg}, it suffices to consider the case that $P$ is connected, i.e., we have $s=1$.  
	We only prove the ``in addition''-part since the formula for the regularity directly follows from this by definition of regularity (see \Cref{sect:Alg}).  
	Let $A \subseteq Q$ be an antichain of cardinality $k$. 
	As $P$ is connected, it follows from \Cref{ex:antichain} (b) that $\KK[P,A]$ is a complete intersection.
	In particular, the Taylor resolution is indeed the graded minimal free resolution of $\KK[P,A]$ and hence $\beta_{k, P \times A}^{S_{P,A}} (\KK[P,A]) = 1$.
	
	Applying \Cref{cor:restrTarget} to all antichains of cardinality $k$ in $Q$ one obtains $a_k$ different multidegrees $m$ such that $\beta_{k,m}^{S_{P,Q}}(\KK[P,Q]) = 1$. Summing up these multigraded Betti numbers yields the result.
\end{proof}

In the special case that $L(P,Q)$ is unmixed, we can compute the regularity and the projective dimension explicitly.
\Regularity
%
\begin{proof}
(a) We prove the formula by induction on $|P|+|Q|$. If $|P|=1$ or $|Q|=1$, then it follows from \Cref{ex:antichain} (a) that $\reg \KK[P,Q]=0$, in the first case, and it follows from \Cref{ex:antichain} (c) that $\reg \KK[P,Q]=|P|-1$, in the second case. In particular, the formula for the regularity holds in these cases. Now let $|P|\geq 2$ and $|Q|\geq 2$.

We can further assume that both, $P$ and $Q$, are connected.  Indeed, once we have shown that the formula holds for all pairs of connected posets, it holds in general case as well. To see this, suppose that $P_1,\ldots,P_s$ and $Q_1,\ldots,Q_r$ are the connected components of $P$ and $Q$, respectively. Using \Cref{cor:disconnectedreg} and the fact that $\ac(Q)=\sum_{j=1}^{r}\ac(Q_i)$, we have
\begin{align*}
\reg \KK[P,Q] &= \sum_{i=1}^{s}\sum_{j=1}^{r}\reg \KK[P_i,Q_j] + (s-1) \\
&= \sum_{i=1}^{s}\sum_{j=1}^{r} (\ac(Q_j)\cdot (|P_i|-1)) + (s-1) \\
&= \ac(Q)\cdot(|P|-s)+s-1,
\end{align*}
and hence the claim follows also for disconnected $P$ and $Q$.
So, assume that both, $P$ and $Q$, are connected. 
According to \Cref{pro:unmixed}, we need to consider the following cases:\\

\noindent{\sf Case I.} $P$ is a chain.\\
In this case the result is just Corollary 3.3 in \cite{EHM}.\\

\noindent{\sf Case II.} $Q$ is a chain.\\
In this case, we have
\begin{align*}
 \reg\KK[P,Q]&=\pdim L(P,Q)^{\vee}=\pdim L(Q,P)^{\tau}=\pdim L(Q,P)\\
 &=\pdim \KK[Q,P]-1=\height(L(Q,P))-1=|P|-1.
\end{align*}
Here, the equalities in the first line hold since $L(P,Q)^{\vee}=L(Q,P)^{\tau}$, whereas the ones in the second line follow from $\KK[Q,P]$ being Cohen-Macaulay \cite[Corollary~2.5]{EHM} and from \Cref{cor:minprimes}. Since $\ac(Q)=1$, this shows the desired formula. \\

\noindent{\sf Case III.} $P$ and $Q$ are both rooted with roots $v$ and $w$, respectively.\\
Since $L(P,Q)^{\vee}=L(Q,P)^{\tau}$, we have $\Min(L(P,Q))=\{{\pc}_{\psi}~:~ \psi\in \Hom(Q,P)\}$, where ${\pc}_{\psi}=(x_{\psi(q),q}~:~q\in Q)$ (see \cite[Section 1.4]{FGH}, \cite[Section 1]{HQS}). Since $L(P,Q)$ is a squarefree monomial ideal, it is the intersection of its minimal primes. Hence $L(P,Q)=\bigcap_{\psi\in \Hom(Q,P)} \pc_{\psi}$. Let
\begin{equation}
J_1=\bigcap_{\substack{
\psi\in \Hom(Q,P) \\
\psi(w)=v
}}
{\pc}_{\psi} \quad \mbox{ and } \quad J_2=\bigcap_{\substack{
\psi\in \Hom(Q,P) \\
\psi(w)\neq v
}}{\pc}_{\psi}.
\nonumber
\end{equation}
Then $L(P,Q)=J_1\cap J_2$. We claim that $J_1=(x_{v,w})+L(P,Q\setminus w)S_{P,Q}$.
 Indeed, since each map $\phi\in \Hom(Q\setminus w,P)$ can be extended to a map in $\Hom(Q,P)$ by putting $\phi(w)=v$, we have $J_1=(x_{v,w})+\cap_{\phi\in \Hom(Q\setminus w,P)} \pc_{\phi}S_{P,Q}$. Moreover, as $P$ is connected, we have $\Min(L(P,Q\setminus w)S_{P,Q})=\{{\pc}_{\phi}S_{P,Q}~:~ \phi\in \Hom(Q\setminus w,P)\}$,  and hence $L(P,Q\setminus w)S_{P,Q}=\cap_{\phi\in \Hom(Q\setminus w,P)} \pc_{\phi}S_{P,Q}$. So, the claim follows.

 Next, we show that $J_2=L(P\setminus v,Q)S_{P,Q}$. First, note that, if $\phi\in \Hom(Q,P)$ with $\phi(w)\neq v$, then, using connectedness of $Q$, we infer that $\phi(q)\neq v$ for all $q\in Q$. Hence, $\phi\in \Hom(Q,P\setminus v)$, which implies $J_2=\cap_{\phi\in \Hom(Q,P\setminus v)} \pc_{\phi}S_{P,Q}$.
 Since $Q$ is connected, the primes $\pc_{\phi}S_{P,Q}$ in this intersection are exactly the minimal primes of $L(P\setminus v,Q)S_{P,Q}$, and hence the claim follows.

 Now we prove that $J_1+J_2=L(P\setminus v,Q)S_{P,Q}+(x_{v,w})$. For this purpose, it suffices to show that $L(P,Q\setminus w)S_{P,Q}\subseteq L(P\setminus v,Q)S_{P,Q}$. If $\phi\in \Hom(P,Q\setminus w)$, then its restriction $\tilde{\phi}$ to $P\setminus v$, is also a map in $\Hom(P\setminus v,Q)$ and thus, $u_{\tilde{\phi}}\in L(P\setminus v,Q)S_{P,Q}$. Since $u_{\tilde{\phi}}$ divides $u_{\phi}$, we conclude that $u_{\phi}\in L(P\setminus v,Q)S_{P,Q}$.

 We consider the following exact sequence of graded $S_{P,Q}$-modules
\[
0\rightarrow \KK[P,Q]\rightarrow S_{P,Q}/J_1\oplus S_{P,Q}/J_2\rightarrow S_{P,Q}/(J_1+J_2)\rightarrow 0.
\]
Then, by \cite[Corollary~18.7 (2)]{P}, we have
\begin{equation}\label{eq:reg}
\reg \KK[P,Q]\leq \max \{\reg(S_{P,Q}/J_1), \reg( S_{P,Q}/J_2) , \reg (S_{P,Q}/(J_1+J_2))+1\}.
\end{equation}
We claim that the maximum on the right-hand side of \eqref{eq:reg} is attained by $\reg (S_{P,Q}/J_1)$. \\
Indeed, since $Q$ is rooted, so is  $Q\setminus w$ and since $P$ is connected it follows from the  induction hypothesis that
\begin{equation}\label{eq1}
\reg(S_{P,Q}/J_1)=\reg(S_{P,Q\setminus w}/L(P,Q\setminus w))=\reg \KK[P,Q\setminus w] = \ac(Q)\cdot (|P|-1).
\end{equation}
where the last equality holds because $\ac(Q\setminus w)=\ac(Q)$. Similarly, since $P$ is rooted, so is $P\setminus v$. If $P\setminus v$ has $\ell$ connected components, we infer from the induction hypothesis that
\begin{equation}\label{eq2}
\reg(S_{P,Q}/J_2)=\reg \KK[P\setminus v,Q] =\ac(Q)\cdot (|P|-1-\ell)+\ell-1.
\end{equation}
Moreover, it holds that
\begin{align}
\reg(S_{P,Q}/(J_1+J_2))+1&=\reg(S_{P\setminus v,Q}/L(P\setminus v,Q))+1\notag\\
&=\reg \KK[P\setminus v,Q] +1=\ac(Q)\cdot (|P|-1-\ell)+\ell-1\label{eq3}
\end{align}
Comparing the values for the different regularities in \eqref{eq1}, \eqref{eq2} and \eqref{eq3} we see that $\reg (S_{P,Q}/J_1)$ is the maximum. Using \eqref{eq:reg} we conclude that
\begin{equation*}
\reg \KK[P,Q] \leq \reg(S_{P,Q}/J_1)=\reg \KK[P,Q\setminus w] =\ac(Q)\cdot (|P|-1).
\end{equation*}
Finally, using the lower bound in \Cref{cor:lowerboundreg}, we get
\[
\reg \KK[P,Q]=\ac(Q)\cdot (|P|-1),
\]
as desired. \\

\noindent{\sf Case IV.}  $P$ and $Q$ are both co-rooted.\\
The result follows analogously to {\sf Case III}. \\

(b) Let $P_1,\ldots,P_s$ be the connected components of $P$. Since $L(P,Q)$ is unmixed, \Cref{pro:unmixed} implies that $L(P_i,Q)^{\vee}=L(Q,P_i)^{\tau}$ for each $i=1,\ldots,s$. It follows from part (a) that
\begin{equation*}
\pdim \KK[P_i,Q] =\reg L(P_i,Q)^\vee =\reg L(Q,P_i)=\reg \KK[Q,P_i] +1=\ac(P_i)\cdot (|Q|-r)+r.
\end{equation*}
Now, by \Cref{cor:disconnectedreg}, we have
\begin{align*}
\pdim \KK[P,Q]  &= \sum_{i=1}^{s}\pdim \KK[P_i,Q]  - (s-1) \\
&= \sum_{i=1}^{s} (\ac(P_i)\cdot (|Q|-r)+r) - (s-1) \\
&= \ac(P)\cdot (|Q|-r)+sr-(s-1)=\ac(P)\cdot (|Q|-r)+s(r-1)+1,
\end{align*}
and hence the result follows.
\end{proof}

Note that the value for the Castelnuovo-Mumford regularity for the unmixed case \Cref{thm:regformula} actually equals the lower bound from \Cref{cor:lowerboundreg}. 
Moreover, we do not know any example of two posets $P$ and $Q$, such that the regularity of $L(P,Q)$ is different from the value given in \Cref{thm:regformula} (see  also \Cref{conj:bounds}). 
On the other hand, the formula for the projective dimension does not hold without the assumption of $L(P,Q)$ being unmixed:

\begin{example}\label{ex:pdim}
\begin{figure}[h!]
\begin{tikzpicture}[scale = 0.6]
\begin{scope}
	\node at (0.5,-2) {$P$};
	\draw (0,0) node[punkt] {} -- ++(-60:1) node[punkt] {} -- ++(60:1) node[punkt] {};
\end{scope}
\begin{scope}[shift={(5,0)}]
 	\node at (0.5,-2) {$Q$};
 	\draw (0,-0.9) node[punkt] {} -- ++(60:1) node[punkt] {} -- ++(-60:1) node[punkt] {};
\end{scope}
\end{tikzpicture}
%
\caption{}
\label{fig:pdim}
\end{figure}

Let $P$ and $Q$ be the posets depicted in \Cref{fig:pdim}.
 Both posets are connected but $L(P,Q)$ is not unmixed. Moreover, it holds that $\pdim \KK[P,Q] = 4$, while $\ac(P)(|Q|-1) +1 = 5$.
\end{example}

\subsection{The linear strand}
In this subsection, we adopt the following convention. Given a poset $P$, an element $p \in P$ is called \emph{maximal}, if $p \nleq p'$ for all $p' \in P$. Note that maximal elements always exist but are not necessarily unique. Minimal elements are defined analogously. If $P$ has a unique maximal and minimal element, we denote it by $\hat{1}_P$ and $\hat{0}_P$, respectively. If both, $\hat{1}_P$ and $\hat{0}_P$, exist, then $P$ is called \emph{bounded}.

\newcommand{\aco}{\overline{\ac}}
\newcommand{\acu}{\underline{\ac}}
We define $\aco(P)$ to be the number of maximal elements of $P$ and $\acu(P)$ to be  the number of minimal elements of $P$.
Note that the set of maximal elements (resp. the set of minimal elements) always forms an antichain, so it holds that $\aco(P) \leq \ac(P)$ and $\acu(P) \leq \ac(P)$. Moreover, if $P$ is rooted, then $\aco(P) = \ac(P)$. Similarly, if $P$ is co-rooted, then $\acu(P) =\ac(P)$.

Our next result is a lower bound for the length of the linear strand of $\KK[P,Q]$.
We first recall the definition of this notion. 
For a monomial ideal $I$ in a polynomial ring $S$ with $d=\min\set{\deg(u)\with u\in G(I)}$ the \emph{length of the linear strand} of $S/I$ is defined as
\[
\lin(S/I):=\max\{i~:~\beta^S_{i,i+d-1}(S/I)\neq 0\}.
\]
Moreover, we set $\lin(I):=\lin(S/I)-1$. 

\begin{proposition}\label{prop:linear}
Let $P,Q$ be posets and let $Q' \subseteq Q$ be a subposet.
\begin{enumerate}
	\item[{\em(a)}] If $Q'$ has a unique minimal element, then $\lin(\KK[P,Q]) \geq \aco(P)\cdot(|Q'|-1)+1$.
	\item[{\em(b)}] If $Q'$ has a unique maximal element, then $\lin(\KK[P,Q]) \geq \acu(P)\cdot(|Q'|-1)+1$.
	\item[{\em(c)}] If $Q'$ is bounded, then $\lin(\KK[P,Q]) \geq \ac(P)\cdot(|Q'|-1)+1$.
\end{enumerate}
\end{proposition}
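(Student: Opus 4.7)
My plan is, for each case, to exhibit an explicit multidegree $u^*$ at which a Betti number of $\KK[P,Q']$ in the linear strand is nonzero; via \Cref{cor:restrTarget}, the same conclusion then transfers to $\KK[P,Q]$.

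First I would pick an antichain $A \subseteq P$ according to the case: the set of maximal elements of $P$ in case (a), the set of minimal elements in case (b), and a maximum antichain in case (c), so that $|A|$ equals $\aco(P)$, $\acu(P)$ or $\ac(P)$, respectively. Let $U^* := \{p \in P \with p \geq a \text{ for some } a \in A\}$, $D^* := \{p \in P \with p \leq a \text{ for some } a \in A\}$ and $I^* := P \setminus (U^* \cup D^*)$; since $A$ is an antichain, $U^* \cap D^* = A$. For each function $f\colon A \to Q'$ I define $\phi_f\colon P \to Q'$ by $\phi_f(a) = f(a)$ on $A$, $\phi_f(p) = \hat 1_{Q'}$ on $U^* \setminus A$, and $\phi_f(p) = \hat 0_{Q'}$ on $(D^* \setminus A) \cup I^*$. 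In case (a), $U^* = A$ and $I^* = \emptyset$ (every element of $P$ sits below a maximal element), so only $\hat 0_{Q'}$ is actually used; case (b) is dual. A straightforward case analysis, using the extremality of $\hat 0_{Q'}$ and $\hat 1_{Q'}$, shows $\phi_f \in \Hom(P,Q')$ for every $f$.

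Next, I set
\[ c := \prod_{p \in U^* \setminus A} x_{p, \hat 1_{Q'}} \cdot \prod_{p \in (D^* \setminus A) \cup I^*} x_{p, \hat 0_{Q'}} \quad\text{and}\quad u^* := c \cdot \prod_{a \in A,\; q \in Q'} x_{a, q}, \]
so that $u^* = \operatorname{lcm}(u_{\phi_f} \with f\colon A \to Q')$ has total degree $|P| + |A|(|Q'|-1)$. The key claim is that the generators $u_\phi$ of $L(P,Q')$ dividing $u^*$ are \emph{exactly} the $u_{\phi_f}$. Indeed, for $p \notin A$, the only variable $x_{p,q}$ appearing in $u^*$ has $q = \hat 0_{Q'}$ or $q = \hat 1_{Q'}$, which forces $\phi(p)$; meanwhile the order-preservation constraints on $\phi$ impose no further restriction on $\phi|_A$, because $A$ is an antichain and $\hat 0_{Q'}, \hat 1_{Q'}$ are extreme in $Q'$. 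Dividing out the common factor $c$ then yields an order-isomorphism between the open interval $(\hat 0, u^*)$ in the lcm-lattice of $L(P,Q')$ and $(\hat 0, w^*)$ in the lcm-lattice of $L(A,Q')$, where $w^* := \prod_{a, q} x_{a, q}$.

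Finally, since $A$ is an antichain, $L(A,Q') = \prod_{a \in A}(x_{a,q} \with q \in Q')$ is the product of $|A|$ ideals of linear forms in disjoint sets of variables. Iterating \Cref{prop:disjoint}(a) shows $\beta^{S_{A,Q'}}_{k, w^*}(\KK[A,Q']) = 1$ for $k := |A|(|Q'|-1) + 1$. Via the GPW formula $\beta^S_{i, \ab}(S/I) = \dim_\KK \tilde H_{i-2}(\Delta((\hat 0, \ab)_{L_I}); \KK)$ used in the proof of \Cref{prop:disjoint}, the lcm-lattice isomorphism from the preceding step transports this to $\beta^{S_{P,Q'}}_{k, u^*}(\KK[P,Q']) = 1$, and \Cref{cor:restrTarget} then gives $\beta^{S_{P,Q}}_{k, u^*}(\KK[P,Q]) = 1$. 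Since $|u^*| = k + |P| - 1$, this Betti number lies in the linear strand, yielding $\lin(\KK[P,Q]) \geq k$ in all three cases. I expect the main obstacle to lie in the key claim of the third paragraph: ruling out ``spurious'' generators of $L(P,Q')$ dividing $u^*$, and confirming that every $f\colon A \to Q'$ extends to an order-preserving $\phi_f$ on $P$, since both points rest on a careful interplay between the antichain $A$ and the extremality of $\hat 0_{Q'}, \hat 1_{Q'}$.
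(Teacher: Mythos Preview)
Your proof is correct and follows the same strategy as the paper: reduce to $Q'=Q$ via \Cref{cor:restrTarget}, choose the appropriate antichain $A\subseteq P$, observe that the generators of $L(P,Q')$ supported on the relevant variable set are exactly $c\cdot u_f$ for $f\colon A\to Q'$, and then read off the linear-strand Betti number from the known resolution of $\KK[A,Q']$. The only cosmetic difference is the transfer mechanism---the paper phrases it as a ``restriction of $L(P,Q)$ to the variables occurring in generators corresponding to $H$'' and obtains the inequality $\beta_{i,\ab+\mb}(\KK[P,Q'])\geq\beta_{i,\ab}(\KK[A,Q'])$, whereas you make the lcm-lattice isomorphism explicit and apply the GPW formula at the single multidegree $u^*$; both suffice (and your set $I^*$ is always empty for the antichains you actually pick, so its presence is harmless).
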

\begin{proof}
By \Cref{cor:restrTarget}, we may assume that $Q = Q'$.

We first consider the case that $Q$ is bounded.
Let $A \subseteq P$ be a maximal antichain. Consider the sets
\begin{align*}
P_{>A} &:= \set{p \in P\setminus A \with p > a \text{ for some } a \in A}\qquad \text{and}\\
P_{<A} &:= \set{p \in P\setminus A \with p < a \text{ for some } a \in A}.
\end{align*}
As $A$ is maximal, $P = P_{>A} \cup A \cup P_{<A}$ is a disjoint decomposition of $P$.
Let $H \subseteq \Hom(P,Q)$ be the set of homomorphisms $\phi:P\to Q$, which send $P_{>A}$ to $\hat{1}_Q$ and $P_{<A}$ to $\hat{0}_Q$.
Define
\[ m_A := \prod_{p \in P_{<A}} x_{p,\hat{0}_Q} \cdot \prod_{p \in P_{>A}} x_{p,\hat{1}_Q}. \]
Then the monomials $u_\phi$ with $\phi \in H$ are exactly the monomials of the form $m_A \cdot u_\psi$ for some $\psi \in \Hom(A,Q)$.
So the restriction of $L(P,Q)$ to the variables occurring in generators corresponding to the elements of $H$ equals $m_A L(A,Q)$.
Hence
\newcommand{\mb}{\mathbf{m}}
\[ \beta_{i,\ab + \mb}(\KK[P,Q]) \geq \beta_{i,\ab}(\KK[A,Q]) \]
for any $i$ and $\ab \in \NN^{A \times Q}$ and $\mb = \deg m_A$.
In particular, $\beta_{p,p+|P|-1}(\KK[P,Q]) \geq \beta_{p,p+|A|-1}(\KK[A,Q]) > 0$
for $p = |A|\cdot(|Q|-1)+1$ by \Cref{ex:antichain} (a) and the claim follows.

If $Q$ has only a unique minimal element, then we apply the same argument to the antichain $A \subset P$ consisting of all maximal elements. In this case $P_{>A} = \emptyset$, so we do not need to refer to $\hat{1}_Q$. The case that $Q$ has only a unique maximal element is analogous.
\end{proof}

In some special situations, the preceding result shows that the length of the linear strand equals the projective dimension:
\begin{corollary}\label{cor:linear}
	Let $P$ and $Q$ be posets.
\begin{enumerate}
	\item[\emph{(a)}] If $Q$ is connected and $P$ and $Q$ are both rooted or both co-rooted, then
	\[ \pdim \KK[P,Q] = \lin(\KK[P,Q]) = \ac(P)\cdot(|Q|-1)+1. \]
	\item[\emph{(b)}] If $P$ is a chain and $Q$ has a unique minimal or a unique maximal element, then
	\[ \pdim \KK[P,Q] = \lin(\KK[P,Q]) = |Q|. \]
\end{enumerate}
\end{corollary}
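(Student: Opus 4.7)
The plan is to sandwich $\lin(\KK[P,Q])$ between a lower bound coming from \Cref{prop:linear} and the trivial upper bound $\lin(\KK[P,Q]) \leq \pdim \KK[P,Q]$, with $\pdim \KK[P,Q]$ computed via \Cref{thm:regformula}(b).

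For part (a), I first treat the case that $P$ and $Q$ are both rooted. Since $Q$ is connected and rooted, its Hasse diagram is a rooted tree and hence $Q$ has a unique minimum $\hat{0}_Q$. Applying \Cref{prop:linear}(a) with $Q' = Q$ yields
\[ \lin(\KK[P,Q]) \geq \aco(P)(|Q|-1)+1 = \ac(P)(|Q|-1)+1, \]
where the equality uses that $\aco(P) = \ac(P)$ whenever $P$ is rooted (as recorded in the paragraph introducing $\aco$ and $\acu$). On the other hand, \Cref{pro:unmixed} gives that $L(P,Q)$ is unmixed, so \Cref{thm:regformula}(b) applies; with $r = 1$ (as $Q$ is connected) it reads
\[ \pdim \KK[P,Q] = \ac(P)(|Q|-1)+1. \]
Combined with $\lin \leq \pdim$, this pinches the two quantities to the common value. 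The co-rooted subcase is entirely analogous: one uses \Cref{prop:linear}(b), the equality $\acu(P) = \ac(P)$ valid for co-rooted $P$, and the fact that a connected co-rooted $Q$ has a unique maximum.

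For part (b), $P$ being a chain forces $\ac(P) = \aco(P) = \acu(P) = 1$, and $P$ is trivially a disjoint union of chains, so \Cref{pro:unmixed} again yields that $L(P,Q)$ is unmixed. Plugging $\ac(P) = 1$ and $s = 1$ into \Cref{thm:regformula}(b) collapses the formula to $\pdim \KK[P,Q] = |Q|$. Depending on whether $Q$ has a unique minimum or a unique maximum, \Cref{prop:linear}(a) or (b), applied with $Q' = Q$, supplies the matching lower bound $\lin(\KK[P,Q]) \geq 1 \cdot (|Q|-1)+1 = |Q|$, and $\lin \leq \pdim$ closes the argument.

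No step is subtle here; the heart of the argument is just the observation that the rooted (resp.\ co-rooted) hypothesis on $P$ exactly compensates for the fact that \Cref{prop:linear}(a) and (b) yield bounds phrased in terms of $\aco(P)$ or $\acu(P)$ rather than $\ac(P)$, so that the lower bound on the linear strand matches the projective dimension on the nose.
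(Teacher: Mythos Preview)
Your proof is correct and follows essentially the same route as the paper: a sandwich argument using \Cref{prop:linear} (with $Q'=Q$) for the lower bound on $\lin$, the trivial inequality $\lin \le \pdim$, and \Cref{thm:regformula}(b) (via \Cref{pro:unmixed}) for the exact value of $\pdim$. If anything, you are slightly more explicit than the paper in checking the unmixedness hypothesis before invoking \Cref{thm:regformula}.
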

\begin{proof}
\begin{asparaenum}
\item[(a)] If $P$ is rooted, then $\aco(P) = \ac(P)$, and if $Q$ is connected and rooted, then it has a unique minimal element, so
	$\pdim \KK[P,Q] \geq \lin(\KK[P,Q]) \geq \ac(P)\cdot(|Q|-1)+1$ by the preceding proposition.
	On the other hand, $\pdim \KK[P,Q] = \ac(P)\cdot(|Q|-1)+1$ by \Cref{thm:regformula}.
	The co-rooted case is proven similarly.
\item[(b)] If $P$ is a chain, then $\aco(P) = \acu(P) = 1$, so \Cref{prop:linear} implies that $\lin(\KK[P,Q]) \geq |Q|$.
	On the other hand, $L(P,Q)$ is unmixed by \Cref{pro:unmixed} and both $P$ and $Q$ are connected, thus $\pdim \KK[P,Q] = |Q|$  by \Cref{thm:regformula}.
\end{asparaenum}
\end{proof}

In many cases, the bound given by \Cref{prop:linear} gives the actual length of the linear strand for a suitable choice of $Q'$. However, this does not always hold as the following example shows:
\begin{example}\label{ex:cross}
\begin{figure}[h!]
\begin{tikzpicture}[scale = 0.6]
\begin{scope}
	\node at (0,-2) {$P$};
	\draw (0,-0.5) node[punkt] {} -- ++(0,1) node[punkt] {};
\end{scope}
\begin{scope}[shift={(5,0)}]
 	\node at (0,-2) {$Q$};
 		\draw (0,0) node[punkt] {} -- (60:1) node[punkt] {}
		(0,0) -- (-60:1) node[punkt] {}
		(0,0) -- (120:1) node[punkt] {}
		(0,0) -- (-120:1) node[punkt] {};
\end{scope}
\end{tikzpicture}
\caption{}
\label{fig:cross}
\end{figure}
Consider $L(P,Q)$ for the two posets depicted in \Cref{fig:cross}.
One can easily compute with \texttt{Macaulay2} \cite{M2} that $\lin \KK[P,Q]=\pdim \KK[P,Q]=5$,
while the best bound obtainable from \Cref{prop:linear} is $4$.

\end{example}

In \cite{FGH}, letterplace and co-letterplace ideals of posets are studied and it is shown that several well-known classes of ideals arise as quotients of letterplace ideals by a regular sequence. This also provides part of the motivation for this article since the results, obtained so far, can be applied to those classes of ideals. 

\begin{example}
\begin{asparaenum}
\item[(a)] {\sf Initial ideals of determinantal ideals:}\\
It was shown in \cite[Section 3.2 and 3.3]{FGH} that the initial ideal of $s$-minors of a general $(n+s-1)\times (m+s-1)$-matrix $X$ with respect to a diagonal term order can be realized by cutting down $L([s],[m]\times [n])$ by a regular sequence. Let $I_s(X)$ denote the ideal of $s$-minors of $X$ and $\init_{\prec}(I_s(X))$ its initial ideal with respect to a diagonal term order.
Since $[m]\times [n]$ is bounded, it directly follows from \Cref{cor:linear}, that 
\begin{equation*}
\pdim \init_{\prec}(I_s(X)) = \lin( \init_{\prec}(I_s(X))) =  m\cdot n -1.
\end{equation*}
So as to compute the regularity of $\init_{\prec}(I_s(X))$, it follows from \Cref{thm:regformula} or \cite[Corollary 3.3]{EHM} that we need to determine $\ac([m]\times [n]) $. For this, first note that since $[n]$ and $[m]$ have the Sperner property (see \cite{StTopicsInAlg} for the definition of the Sperner property), so has $[m]\times [n]$. In particular, the size of a maximal antichain in $[m]\times [n]$ equals the maximal size of a set of elements of the same rank. Moreover, since $[m]\times [n]$ is rank-symmetric and since its  rank numbers form a unimodal sequence, it follows that
 \begin{equation*}
  \ac([m]\times [n])=|\{(p,q)\in [m]\times [n]~:~p+q=\lfloor\frac{m+n+2}{2}\rfloor\}|=\min(m,n).
\end{equation*}
This yields $\reg \init_{\prec}(I_s(X)) =\min(m,n)\cdot (s-1)+1.$

\item[(b)] {\sf Initial ideals of the ideals of \texorpdfstring{$2$}{2}-minors of a symmetric matrix:}\\
We briefly recall a construction from \cite{FGH}. Given posets $P$ and $Q$, the set $\Hom(P,Q)$ of order-preserving maps $\phi:P\rightarrow Q$ is turned into a poset by setting $\phi\leq \psi$ if $\phi(p)\leq \psi(p)$ for all $p\in P$. 
Reducing the ideal $L([2],\Hom([2],[n]))$ via a particular regular sequence, one obtains the ideal generated by $x_{i_1,i_2}x_{j_1,j_2}$ with $i_1<j_1$, $i_2<j_2$ and $i_1\leq i_2$ and $j_1\leq j_2$. This ideal is an initial ideal of the ideal of $2$-minors of a general symmetric $(n+1)\times (n+1)$-matrix (see \cite[Section 3.4]{FGH} for more details, and also \cite[Section 5]{CHT}). In the following, we denote this ideal by $I$. \Cref{cor:linear} implies that 
\begin{equation*}
\pdim I = \lin(I) = |\Hom([2],[n])|-1=\frac{n(n+1)}{2}-1.
\end{equation*}
Moreover, by \Cref{thm:regformula} or \cite[Corollary 3.3]{EHM}, we have $\reg I=\ac(\Hom([2],[n]))+1 $. 
To compute $\ac(\Hom([2],[n]))$, note that every antichain $\phi_1, \phi_2, \dots, \phi_s$ in $\Hom([2],[n])$ can be ordered such that
\[ \phi_1(1) < \phi_2(1) < \dotsb < \phi_s(1) \leq \phi_s(2) < \phi_{s-1}(2) < \dotsb < \phi_1(2). \]
Thus, $\ac(\Hom([2],[n]))= \lceil\frac{n}{2}\rceil$ and hence $\reg I= \lceil\frac{n}{2}\rceil+1$.
\end{asparaenum}
\end{example}

\section{From Buchsbaum-ness to Gorenstein-ness}\label{sect:charact}
In this section, we characterize when $\KK[P,Q]$ has a certain algebraic property in terms of the posets $P$ and $Q$.
The considered properties include 
Gorenstein-ness of $\KK[P,Q]$, as well as (sequentially) Cohen-Macaulay-ness, Buchsbaum-ness and satisfying Serre's condition $(S_r)$.

Let $S=K[x_1,\ldots,x_n]$ be a polynomial ring and let $I$ be a monomial ideal in $S$, which is generated in a single degree. Recall that $I$ is called \emph{weakly polymatroidal} if for any two monomials $u={x_1}^{a_1}\cdots {x_n}^{a_n}$ and $v={x_1}^{b_1}\cdots {x_n}^{b_n}$ in $G(I)$ for which there exists an integer $t$ with $a_1 = b_1,\ldots, a_{t-1} = b_{t-1}$ and $a_t > b_t$, there exists $l>t$ such that $x_t(v/x_l)\in I$. Note that here the order of the variables of $S$ is considered as $x_1>\cdots>x_n$.

Moreover, an ideal $I\subseteq S$ is said to have \emph{linear quotients} if one can order the minimal generators of $I = (m_1, \dotsc, m_r)$, such that the colon ideals $(m_1, \dots, m_{i-1})\colon m_i$ are generated by linear forms for $i = 2, \dotsc, r$. It was shown in \cite[Theorem~1.4]{KH} that a weakly polymatroidal ideal has linear quotients.


\Polymatroidal
\begin{proof}
The implications ``(a)$\Rightarrow$(b)'', ``(b)$\Rightarrow$(c)'' and ``(c)$\Rightarrow$(d)'' are well known.
\begin{asparaenum}
\item[``(d)$\Rightarrow$(e)'':]

	Assume to the contrary that neither $P$ is an antichain nor $Q$ is a chain.
	Then $P$ has a connected component $P_1 \subseteq P$ with at least two elements, and $Q$ has an antichain of length $2$.
	Thus, by \Cref{cor:lowerboundreg}, there exists a multidegree $\ab \in \NN^{P_1 \times Q}$ with total degree $2|P_1|$ such that $\beta_{2,\ab}(\KK[P_1,Q]) \neq 0$.
	If $P = P_1$, then this contradicts our assumption (d).
	Otherwise, let $P' := P \setminus P_1$ and let $\bb \in \NN^{P'\times Q}$ be the multidegree of any generator of $L(P',Q)$.
	\Cref{prop:disjoint} implies that
	\[ \beta_{2,(\ab,\bb)}(\KK[P,Q]) \geq \beta_{2,\ab}(\KK[P_1,Q])\cdot\beta_{1,\bb}(\KK[P',Q]) \geq 1. \]
	The total degree of $(\ab,\bb)$ is $2 |P_1| + |P'| = |P| + |P_1| \geq |P| + 2$, which 
	again contradicts (d).

\item[``(e)$\Rightarrow$(a)'':] If $Q$ is a chain, then $L(Q,P)^\vee=L(P,Q)^{\tau}$ by \Cref{prop:dualclass} and it follows from \cite[Theorem~2.2]{EHM} that all powers of $L(P,Q)$ are weakly polymatroidal.

		Now, assume that $P$ is an antichain and let $k\geq 1$.
		Suppose that $P=\{p_1,\ldots,p_r\}$ and $Q=\{q_1,\ldots,q_l\}$.
		We order the variables of $S_{P,Q}$ lexicographically, namely $x_{p_i,q_j}>x_{p_{i'},q_{j'}}$ if and only if $i<i'$, or $i=i'$ and $j<j'$.
		Obviously, ${L(P,Q)}^k$ is generated in the single degree $k |P|$.
		
		Let $\mathbf{u}$ and $\mathbf{v} =u_{\phi_1}\ldots u_{\phi_k}$ be two monomial generators of ${L(P,Q)}^k$,
		where  $\phi_i\in \Hom(P,Q)$ for all $i=1,\ldots,k$ (which are not necessarily pairwise distinct).
		To simplify notation, we denote by $\deg_{i,j} \bf{w}$ the degree of the variable $x_{p_i,q_j}$ in any monomial $\bf{w}$.
		Assume that ${\deg}_{t,s}{\bf{u}} > {\deg}_{t,s}{\bf{v}}$ for some $t$ and $s$, and $\deg_{i,j}{\bf{u}}={\deg}_{i,j}{\bf{v}}$ for all $x_{p_i,q_j}> x_{p_t,q_s}$.
		Note that $\sum_{i=1}^l \deg_{t,i} \mathbf{w} = k$ for any $\mathbf{w} \in G(L(P,Q)^k)$ and $t=1,\ldots,r$.
		Hence we have that $\sum_{i=1}^s \deg_{t,i} \mathbf{v} < \sum_{i=1}^s \deg_{t,i} \mathbf{u} \leq k$ and thus $\sum_{i=s+1}^l \deg_{t,i} \mathbf{v} > 0$.
		So there exists an $i$ such that $\phi_i(p_t) = q_h$ for some $h > s$; and in particular $s < l$.
		We define ${\widetilde{\phi}}_{i}:P \to Q$ as follows:
		\[ \widetilde{\phi}_i(p) = \begin{cases}
		q_s &\text{ if } p = p_t;\\
		\phi_i(p) &\text{ otherwise.}
		\end{cases}\]
		Since $P$ is an antichain, we have ${\widetilde{\phi}}_{i}\in \Hom(P,Q)$, and hence $u_{{\widetilde{\phi}}_{i}}\in G(L(P,Q))$.
		Thus, $u_{{\widetilde{\phi}}_{i}}({\bf{v}}/u_{\phi_i})\in {L(P,Q)}^k$.
		On the other hand, $u_{\widetilde{\phi}_{i}}({\bf{v}}/u_{\phi_i})=x_{p_t,q_s}({\bf{v}}/x_{p_t,q_h})$, and hence $x_{p_t,q_s}({\bf{v}}/x_{p_t,q_h})\in  {L(P,Q)}^k$. As  $x_{p_t,q_s} > x_{p_t,q_h}$ because of $h>s$, we conclude that ${L(P,Q)}^k$ is weakly polymatroidal.
\end{asparaenum}

\end{proof}

Note that the condition on Betti numbers of an ideal $I$ given in \Cref{cor:charlinres} part (d), is sometimes called \emph{linearly related} (see for example \cite{EHH}) or \emph{$N_{|P|,2}$ property} (see for example \cite{EGHP}). 

\medskip
In the following, we classify those pairs of posets $(P,Q)$ such that $\KK[P,Q]$ is Cohen-Macaulay or satisfies Serre's condition $(S_r)$ for an $r\geq 2$.
We first recall the definition of Cohen-Macaulayness and Serre's condition $(S_r)$. Given a Noetherian graded $\KK$-algebra $R$ and a finitely generated graded $R$-module $M$, we say that $M$ is \emph{Cohen-Macaulay} if $\depth M = \dim M$. We say that $M$ satisfies \emph{Serre's condition $(S_r)$} if $\depth(M_{\mathfrak{p}})\geq \min\{r,\height(\mathfrak{p})\}$ for all $\mathfrak{p}\in \Spec(R)$.

Recall that a graded $R$-module $M$ is called \emph{sequentially Cohen-Macaulay} if there exists a finite filtration of graded $R$-modules
$0=M_{0}\subset M_{1}\subset\cdots \subset M_{r}=M$ such that each $M_{i}/M_{i-1}$ is Cohen-Macaulay, and the Krull dimensions of the quotients
are increasing, i.e.,
\[
\mathrm {dim} (M_{1}/M_{0}) < \mathrm{dim}(M_{2}/M_{1}) < \cdots < \mathrm{dim}(M_{r}/M_{r-1}).
\]
It is well-known that a Stanley-Reisner ring $\KK[\Delta]$ is sequentially Cohen-Macaulay if and only if the pure $i$-skeleta  $\Delta^{[i]}$ of $\Delta$ are Cohen-Macaulay for all $-1\leq i \leq \dim \Delta$, cf. \cite[Theorem 3.3]{D}.
Here, the \emph{pure $i$-skeleton} of a simplicial complex $\Delta$ is the subcomplex $\Delta^{[i]} \subseteq \Delta$, whose facets are all $i$-dimensional faces of $\Delta$.

\CohenMacaulay
\begin{proof}
The implications ``$(a)\Rightarrow(b)\Rightarrow(c)$'' and ``$(b)\Rightarrow(d)$'' are either well-known or trivial.

\begin{asparaenum}
\item[``$(c)\Rightarrow(e)$'' and ``$(d)\Rightarrow(e)$'':]
Recall from \cite{HTYZ} that $\KK[P,Q]$ is called \emph{sequentially $(S_2)$} if every quotient in the dimension filtration of $\KK[P,Q]$ satisfies Serre's condition $(S_2)$. Clearly, both $(c)$ and $(d)$ imply that $\KK[P,Q]$ is sequentially $(S_2)$.
So it is sufficient to show that this implies $(e)$.

We first show that $P$ is connected.
So assume on the contrary that $P$ is not connected, i.e., it is a disjoint union $P=P_1 \cup P_2$ of two subposets (which are not necessarily connected themselves).
Let $\Delta_i := \Delta(P_i,Q)$ and let $V_i$ be the set of vertices of $\Delta_i$ for $i = 1,2$. To simplify notation, we set $\Delta = \Delta(P,Q)$. If follows from $L(P,Q) = L(P_1,Q)L(P_2,Q)$ that
\begin{equation*} 
\Delta = \set{F_1 \cup F_2 \with F_1 \subseteq V_1, F_2 \subseteq V_2 \text{ and } F_1 \in \Delta_1\text{ or } F_2 \in \Delta_2}.
\end{equation*}
By \cite[Theorem 2.6]{HTYZ} $\Delta$ is sequentially $(S_2)$ if and only if its pure $i$-skeleton $\Delta^{[i]}$ is $(S_2)$ for all $-1\leq i\leq \dim\Delta$.
Further, it follows from Hochster's formula for the local cohomology that this is the case if and only if the link of every face $F \in \Delta^{[i]}$ with $\dim F \leq \dim \Delta^{[i]}-2$ is connected for $-1\leq i\leq \dim \Delta$ (see also \cite[page 4]{Te}).
Let $F_1 \in \Delta_1, F_2 \in \Delta_2$ be facets and assume that $|V_1\setminus F_1| \leq |V_2\setminus F_2|$.
Then $F := F_1 \cup F_2 \in \Delta$ and $\lk_\Delta F = 2^{V_1\setminus F_1} \cup 2^{V_2\setminus F_2}$.
Even more, for $i = \dim F + |V_1\setminus F_1|$ we have that $\lk_{\Delta^{[i]}} F = 2^{V_1\setminus F_1} \cup (2^{V_2\setminus F_2})^{[i]}$, which is not connected.
On the other hand, \Cref{cor:minprimes} implies that $|V_1\setminus F_1| \geq |Q|\geq 2$, and hence
\[\dim F + 2\leq \dim F + |V_1\setminus F_1| = \dim\Delta^{[i]}.\]
So we arrive at a contradiction, and hence $P$ is connected.

By \cite[Theorem 3.2]{HTYZ}, $\KK[P,Q]$ being sequentially $(S_2)$ implies that the first two steps in the minimal graded free resolution of $L(P,Q)^\vee$ are componentwise linear.
We are going to show that the latter condition is violated if $P$ is not a chain and $Q$ is not an antichain.
So assume to the contrary that $P$ contains two incomparable elements $p_1, p_2$ and $Q$ has a connected component $Q' \subseteq Q$ of cardinality at least $2$.
Let $\psi_1 \in \Hom(Q,P)$ be the constant map with value $p_1$, and let $\psi_2 \in \Hom(Q,P)$ be the map which maps $Q'$ to $p_2$ and $Q \setminus Q'$ to $p_1$.
It is easy to see that both maps satisfy the condition of part (b) of \Cref{cor:minprimes}, hence $\pp_{\psi_1}, \pp_{\psi_2} \in \Min(L(P,Q))$ (for $\psi_2$, one needs that $P$ is connected).
Thus, $u_{\psi_1}$ and $u_{\psi_2}$ are minimal generators of $L(P,Q)^\vee$.
Moreover, every generator of $L(P,Q)^\vee$ of degree $|Q|$ is of the form $u_\psi$ for some $\psi \in \Hom(Q,P)$ by \Cref{prop:minprimes}.
It is not difficult to see that there is no map $\psi \in \Hom(Q,P), \psi \neq \psi_1, \psi_2$, such that $u_\psi$ divides the least common multiple $u$ of $u_{\psi_1}$ and $u_{\psi_2}$, because every $\psi$ whose image is contained in $\set{p_1,p_2}$ maps $Q'$ completely either to $p_1$ or to $p_2$.
Hence, the degree $|Q|$-part of $L(P,Q)^\vee$ has a syzygy in the degree of $u$, which is $|Q| + |Q'| > |Q| + 1$.
Thus the second step of the resolution of $L(P,Q)^\vee$ is not componentwise linear, a contradiction.


\item[``$(e)\Rightarrow(a)$'':] In both cases we have that $L(P,Q)^\vee = L(Q,P)^\tau$.
	Hence \Cref{cor:charlinres} implies that $L(P,Q)^\vee$ has linear quotients and thus $\Delta(P,Q)$ is shellable.
\end{asparaenum}
\end{proof}

\begin{remark}\label{rem:LPcase}
	In \cite{EHM} it was shown that $L(P,Q)$ has a linear resolution if $Q$ is a chain.
	The preceding \Cref{cor:charlinres} can be seen as a converse: If $L(P,Q)$ has a linear resolution, then either $Q$ is a chain, or $P$ is an antichain. But in the latter case, $L(P,Q)$ only depends on the size of $Q$, and so $L(P, Q) = L(P, Q')$ where $Q'$ is a chain with $|Q'| = |Q|$.
	
	Similarly, \Cref{cor:CM} gives a converse to the result that $\KK[P,Q]$ is Cohen-Macaulay if $P$ is a chain, cf. \cite{EHM, FGH}. If $\KK[P,Q]$ is Cohen-Macaulay, then $P$ is connected and either $P$ is a chain or $Q$ is an antichain. In the latter case, $\Hom(P,Q)$ contains only the constant maps, and thus $\KK[P,Q] \cong \KK[P', Q]$ where $P'$ is the chain with $|P|$ elements.
\end{remark}

The next corollary provides a characterization of all pairs $(P,Q)$, for which $\KK[P,Q]$ is Gorenstein or a complete intersection. Let $I$ be a monomial ideal in a polynomial ring $S$ over a field $\KK$. Then, recall that $S/I$ is \emph{Gorenstein} if and only if $S/I$ is Cohen-Macaulay
and the last total Betti number in the minimal graded free resolution of $S/I$ is equal to $1$ (see \cite[Corollary 4.3.5]{Vi}).
Furthermore, $S/I$ is a \emph{complete intersection} if the cardinality of $I$ equals its height, i.e., $|G(I)|=\height(I)$.  
Equivalently, one can see that $S/I$ is a complete intersection if and only if any two elements of $G(I)$ are on disjoint sets of variables.

\begin{corollary}\label{cor:Gor}
Let $P$ and $Q$ be posets such that $|Q|\geq 2$. Then the following conditions are equivalent:
 \begin{itemize}
  \item[{\em(a)}] $\KK[P,Q]$ is a complete intersection.
  \item[{\em(b)}] $\KK[P,Q]$ is Gorenstein.
  \item[{\em(c)}] $P$ is connected and $Q$ is an antichain.
 \end{itemize}
\end{corollary}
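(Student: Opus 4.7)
The implication $(a)\Rightarrow (b)$ is standard, since complete intersections are Gorenstein. The implication $(c)\Rightarrow (a)$ is exactly the final assertion of \Cref{ex:antichain}(b), which tells us that $\KK[P,Q]$ is a complete intersection as soon as $P$ is connected and $Q$ is an antichain. So the substantive content is $(b)\Rightarrow (c)$.

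Assume $\KK[P,Q]$ is Gorenstein; it is then in particular Cohen-Macaulay. \Cref{cor:CM} (applicable because $|Q|\geq 2$) forces $P$ to be connected and either $P$ to be a chain or $Q$ to be an antichain. The second alternative already gives (c), so it remains to exclude the possibility that $P$ is a chain while $Q$ is not an antichain. The degenerate case $|P|=1$ is trivial because $\KK[P,Q]\cong \KK$ is a field, so assume $|P|\geq 2$.

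Next I reduce to $Q$ connected. Let $Q_1,\dots,Q_r$ be the connected components of $Q$. Since $P$ is connected, the ideals $L(P,Q_j)$ are supported on pairwise disjoint sets of variables, so $\KK[P,Q]\cong \bigotimes_{j=1}^{r}\KK[P,Q_j]$ as graded $\KK$-algebras, and Gorenstein-ness passes to each tensor factor. Because $Q$ is not an antichain, some $Q_j$ is a connected poset with $|Q_j|\geq 2$, and it would suffice to show that $\KK[P,Q_j]$ is not Gorenstein. The task thus reduces to the following key claim: \emph{if $P=[n]$ with $n\geq 2$ and $Q$ is connected with $|Q|\geq 2$, then $\KK[P,Q]$ is not Gorenstein.}

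For this claim, observe that by \Cref{pro:unmixed} the ideal $L(P,Q)$ is unmixed (because $P$ is a chain), so \Cref{thm:regformula}(b) applies and yields $\pdim \KK[P,Q]=|Q|$. Consequently the Cohen-Macaulay type of $\KK[P,Q]$ equals $\beta_{|Q|}(\KK[P,Q])$, and Gorenstein-ness would require this to equal $1$. My plan is to show instead that it is at least $2$. By Hochster's formula,
\[
\beta_{|Q|,W}(\KK[P,Q]) = \dim_{\KK} \tilde{H}_{|W|-|Q|-1}(\Delta(P,Q)|_W;\KK),
\]
and I would exhibit at least two distinct subsets $W\subseteq P\times Q$ contributing nontrivially. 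The prototype is the base case $n=|Q|=2$ with $Q$ a cover relation, where $L(P,Q)=(x_{11}x_{21},x_{11}x_{22},x_{12}x_{22})$ is the Stanley-Reisner ideal of a $4$-vertex path and a direct Hochster computation yields type $2$ (two distinct $3$-element induced subcomplexes have nonzero $\tilde{H}_0$). The main obstacle is propagating this base case to arbitrary admissible $(P,Q)$: picking a cover relation $q<q'$ in $Q$ and two elements $p,p'\in P$, one combines \Cref{lem:extension} (every homomorphism from a sub-chain of $P$ to $Q$ extends to $P$) with the explicit description of the canonical module afforded by $L(P,Q)^{\vee}=L(Q,P)^{\tau}$ (valid for $P$ a chain by \Cref{prop:dualclass}) to transport the excess in $\beta_{|Q|}$ from the base case to all admissible $(P,Q)$, giving the desired contradiction.
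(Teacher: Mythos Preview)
Your argument tracks the paper's through the use of \Cref{cor:CM} to reduce $(b)\Rightarrow(c)$ to the case where $P$ is a chain and $Q$ is not an antichain. At that point the paper simply invokes \cite[Corollary~2.5]{EHM}, which already characterizes when $\KK[[n],Q]$ is Gorenstein. You instead attempt to reprove that characterization from scratch, and this is where the proposal has a genuine gap.

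Your reduction from $[n]$ to $[2]$ via \Cref{lem:extension} is sound: every $\phi\colon[2]\to Q$ extends to $[n]\to Q$, so $\beta_{|Q|}(\KK[[n],Q])\geq\beta_{|Q|}(\KK[[2],Q])$, and both sides are the respective types. The base case $|Q|=2$ is also correctly computed. But you still owe the inequality $\beta_{|Q|}(\KK[[2],Q])\geq 2$ for \emph{every} connected $Q$ with $|Q|\geq 2$, and nothing you cite delivers it. \Cref{cor:restrTarget} would only give $\beta_{2,j}(\KK[[2],Q])\geq\beta_{2,j}(\KK[[2],[2]])$, whereas the type sits in homological degree $|Q|$, not $2$; the inequality lands in the wrong spot. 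Your sentence ``one combines \Cref{lem:extension} \dots\ with the explicit description of the canonical module afforded by $L(P,Q)^\vee=L(Q,P)^\tau$ \dots\ to transport the excess'' names tools but carries out no argument. In particular, when $Q$ is itself a chain with $|Q|\geq 2$, the resolution of $\KK[[2],Q]$ is linear and none of the bounds in this paper separate two distinct contributions to $\beta_{|Q|}$; an actual computation (as in \cite{EHM}) is needed. A smaller issue: your handling of $|P|=1$ does not establish (c)---observing that $\KK[P,Q]\cong\KK$ shows (b) holds, not that $Q$ is an antichain---though this edge case is arguably a wrinkle in the corollary's statement rather than in your proof.
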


\begin{proof}
The implication ``$(a)\Rightarrow(b)$'' is well-known, and ``$(c)\Rightarrow(a)$'' follows from \Cref{ex:antichain} $(b)$.

\begin{asparaenum}
\item[``$(b)\Rightarrow(c)$'':] Suppose that $\KK[P,Q]$ is Gorenstein. Then it is Cohen-Macaulay, and hence it follows from \Cref{cor:CM} that $P$ is connected, and $P$ is a chain or $Q$ is an antichain. In the first case, Gorenstein-ness of $\KK[P,Q]$ implies that $Q$ is an antichain (\cite[Corollary~2.5]{EHM}), and the claim follows.
\end{asparaenum}
\end{proof}

In the following, we study the locally Cohen-Macaulay property and the Buchsbaum property of the rings $\KK[P,Q]$. We first recall the relevant definitions.
A Stanley-Reisner ring  $\KK[\Delta]$ is called \emph{locally Cohen-Macaulay} if every localization is Cohen-Macaulay,
except possibly the localization at the irrelevant ideal.
Further, $\KK[\Delta]$ is \emph{Buchsbaum} if and only if it is locally Cohen-Macaulay and unmixed.
Note that the latter is not the original definition but an equivalent characterization due to Miyazaki \cite[Theorem~2 (iv)]{Miya}.

\begin{corollary}\label{cor:Buchsbaum}
	Let $P$ and $Q$ be posets such that $|Q|\geq 2$. Then the following conditions are equivalent:
	\begin{enumerate}
	\item[{\em(a)}] $\KK[P,Q]$ is Buchsbaum, but not Cohen-Macaulay.
	\item[{\em(b)}] $\KK[P,Q]$ is locally Cohen-Macaulay, but not Cohen-Macaulay.
	\item[{\em(c)}] $P$ is an antichain with $2$ elements.
	\end{enumerate}
\end{corollary}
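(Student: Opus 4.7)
The plan is as follows: the implication (a)$\Rightarrow$(b) is immediate from the definition of Buchsbaum, and (c)$\Rightarrow$(a) follows from an explicit computation of $\Delta(P,Q)$. The substance lies in (b)$\Rightarrow$(c), which I propose to prove contrapositively by exhibiting, in every case where $P$ is not a 2-antichain, a non-empty face of $\Delta(P,Q)$ whose link fails to be Cohen--Macaulay, contradicting the locally Cohen--Macaulay hypothesis.

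For (c)$\Rightarrow$(a), observing that $L(P,Q)=\mathfrak{p}_1\cap\mathfrak{p}_2$ with $\mathfrak{p}_i=(x_{p_i,q}:q\in Q)$ identifies $\Delta(P,Q)$ with the disjoint union of two full simplices of dimension $|Q|-1\geq 1$. This complex is pure and each link of a non-empty face is again a simplex, hence Cohen--Macaulay, so $\KK[P,Q]$ is Buchsbaum by the criterion of Miyazaki recalled in the preamble; it is not Cohen--Macaulay because $\tilde{H}_0(\Delta(P,Q);\KK)\neq 0$ while $\dim\Delta(P,Q)\geq 1$.

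For (b)$\Rightarrow$(c) I assume $\KK[P,Q]$ is locally Cohen--Macaulay but not Cohen--Macaulay. By \Cref{cor:CM}, either (i) $P$ is disconnected, or (ii) $P$ is connected, not a chain, and $Q$ is not an antichain. In case (i), if some connected component $P_i$ satisfies $|P_i|\geq 2$, I set $F=(P_i\setminus\{p^*\})\times Q$ for any $p^*\in P_i$; analyzing which $\phi\in\Hom(P,Q)$ can have support in $F\cup G$ identifies $\lk_\Delta F$ with $\Delta(\{p^*\}\sqcup(P\setminus P_i),Q)$, which is not Cohen--Macaulay by \Cref{cor:CM} since its underlying poset is disconnected and $|Q|\geq 2$. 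If $P$ is instead a disconnected antichain with $|P|\geq 3$, choosing $F=\{p_1\}\times Q$ gives $\lk_\Delta F\cong\Delta(P\setminus\{p_1\},Q)$, again non-CM by \Cref{cor:CM}. Hence the only disconnected scenario compatible with the hypothesis is $P$ a 2-antichain, which is (c).

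The main obstacle lies in case (ii), where I propose to reuse the construction from the proof of \Cref{cor:CM}(c,d)$\Rightarrow$(e): pick incomparable $p_1,p_2\in P$ and a connected component $Q'\subseteq Q$ with $|Q'|\geq 2$, let $\psi_1\equiv p_1$ and let $\psi_2$ take the value $p_2$ on $Q'$ and $p_1$ on $Q\setminus Q'$, and set $F:=V\setminus(\mathfrak{p}_{\psi_1}\cup\mathfrak{p}_{\psi_2})$. Then $F$ is a non-empty face because $(p_1,\phi(p_1))\in\mathfrak{p}_{\psi_1}$ for every $\phi\in\Hom(P,Q)$, and $|F|=(|P|-1)|Q|-|Q'|>0$ since $|P|\geq 3$ (as $P$ is connected but not a chain). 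Using \Cref{prop:minprimes} I would verify that $\psi_1$ and $\psi_2$ are the \emph{only} $\psi\in\MHom(Q,P)$ producing a minimal prime of height exactly $|Q|$ contained in $\mathfrak{p}_{\psi_1}\cup\mathfrak{p}_{\psi_2}$; the argument exploits that the incomparability of $p_1,p_2$ forces $\psi(q)\cap\psi(q')\neq\emptyset$ for every comparable pair of $Q'$, and the connectedness of $Q'$ then pins $\psi$ to be constant on $Q'$. Consequently $\lk_\Delta F$ has exactly two top-dimensional facets, supported on the disjoint vertex sets $\{x_{p_2,q}:q\in Q'\}$ and $\{x_{p_1,q}:q\in Q'\}$, each of dimension $|Q'|-1\geq 1$; any remaining facets arise from intermediate minimal primes of strictly larger height and therefore have strictly smaller dimension. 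Thus $\lk_\Delta F$ is either disconnected (if no intermediate primes occur) or not pure (if they do), and in neither situation is it Cohen--Macaulay, yielding the desired contradiction.
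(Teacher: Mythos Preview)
Your proof is correct, but the paper takes a much shorter route for (b)$\Rightarrow$(c). Rather than constructing, case by case, an explicit non-empty face whose link fails to be Cohen--Macaulay, the paper argues numerically: since $\KK[P,Q]$ is not Cohen--Macaulay, \Cref{cor:CM} says it already fails Serre's condition $(S_2)$; combined with the locally Cohen--Macaulay hypothesis this forces $\depth\KK[P,Q]\leq 1$, hence $\pdim\KK[P,Q]\geq |P|\cdot|Q|-1$ by Auslander--Buchsbaum. Feeding this into the upper bound $\pdim\KK[P,Q]\leq |P|(|Q|-1)+1$ of \Cref{cor:easybound} gives $|P|\leq 2$, and \Cref{cor:CM} then excludes the chain case. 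Your approach has the merit of being entirely at the level of simplicial complexes and of pinpointing the concrete combinatorial obstruction (a disconnected or non-pure link) in every situation; the paper's argument instead leverages the strong equivalence ``Cohen--Macaulay $\Leftrightarrow$ $(S_2)$'' established in \Cref{cor:CM} together with the global projective-dimension bound to settle the matter in three lines, without ever looking at a specific face. The steps (a)$\Rightarrow$(b) and (c)$\Rightarrow$(a) are essentially identical in both proofs.
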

\begin{proof}
The implication ``$(a)\Rightarrow(b)$'' is clear.
\begin{asparaenum}
\item[``$(b)\Rightarrow(c)$'':] If $\KK[P,Q]$ is not Cohen-Macaulay, then, by \Cref{cor:CM}, it does not satisfy $(S_2)$.
But as it is locally Cohen-Macaulay, its depth is at most $1$, equivalently its projective dimension is at least $|P|\cdot|Q|-1$.
Now \Cref{cor:easybound} implies that $|P|\cdot |Q|-1 \leq |P|(|Q|-1)+1$ and hence $|P| \leq 2$.
Finally, $P$ needs to be an antichain consisting of $2$ elements, since otherwise $\KK[P,Q]$ would be Cohen-Macaulay by \Cref{cor:CM}.

\item[``$(c)\Rightarrow(a)$'':]
If  $P$ is a $2$-element antichain, then $L(P,Q)$ is the product of two ideals, each generated by $|Q|$ variables.
It is not difficult to see that this implies that $\Delta(P,Q)$ is the disjoint union of two $(|Q|-1)$-simplices.
Thus $\KK[P,Q]$ is unmixed and locally Cohen-Macaulay, i.e., Buchsbaum.
\end{asparaenum}
\end{proof}

Finally, we consider the Golod property.
This is a rather subtle homological property, so we refrain from giving a full definition.
Instead we refer the reader to the survey article \cite{Avra} by Avramov or to \cite{GL}.
As it is well-known that $S/I$ is Golod whenever $I$ has a linear resolution (cf. \cite{HRW}), the following result should be compared with \Cref{cor:charlinres}.
\begin{proposition}\label{prop:golod}
%
Let $P$ and $Q$ be posets. Then the following are equivalent:
\begin{enumerate}[\em (a)]
	\item $\KK[P,Q]$ is Golod.
	\item $P$ is disconnected or $Q$ is a chain.
\end{enumerate}
\end{proposition}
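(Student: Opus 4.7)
The proof naturally splits into two implications.

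For \emph{(b) $\Rightarrow$ (a)}, I would treat the two disjunctive hypotheses separately. If $Q$ is a chain, \Cref{cor:charlinres} shows that $L(P,Q)$ has a linear resolution, whence $\KK[P,Q]$ is Golod by \cite{HRW}. If $P$ is disconnected with connected components $P_1,\dots,P_s$ ($s\geq 2$), then \Cref{cor:disconnectedreg} yields the factorization $L(P,Q) = \prod_{i=1}^s L(P_i,Q)$ into proper monomial ideals whose minimal generators live in pairwise disjoint sets of variables. A classical fact (going back to Herzog--Steurich, and cleanly provable via a K\"unneth-type decomposition of the Koszul complex of $\KK[P,Q]$) asserts that any quotient by a product of proper monomial ideals with pairwise disjoint variable supports is Golod; this settles this case.

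For \emph{(a) $\Rightarrow$ (b)}, I would argue by contrapositive: assume $P$ is connected and $Q$ is not a chain, so $|P|\geq 2$ and there exist incomparable $q_0, q_1 \in Q$. The aim is to exhibit an obstruction to Golod-ness, concretely a non-vanishing binary product in the Koszul homology of $\KK[P,Q]$, as a single such product already rules out Golod-ness. Via Hochster's formula, multigraded Koszul homology classes of $\KK[P,Q]$ correspond to reduced simplicial homology classes of induced subcomplexes of $\Delta(P,Q)$, and the product is computed through these subcomplexes. Using a cover relation $p_0 \lessdot p_1$ in $P$ (which exists since $P$ is connected with $|P|\geq 2$) together with the incomparable pair $q_0, q_1$, I would construct two multigraded Koszul cycles supported on carefully chosen subsets of $P\times\set{q_0,q_1}$ that ``link'' the pairs $(p_i,q_j)$, and then verify directly that their product is non-zero in $H_*(K_\bullet(\KK[P,Q]))$.

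The main obstacle is precisely this last verification: identifying the correct two cycles and checking that their product survives passage to homology. Golod-ness requires vanishing of \emph{all} higher Massey products, so in principle one might need to rule out the degenerate case where the binary product vanishes but a higher Massey product does not; however, under the standing hypotheses, the connectedness of $P$ coupled with the incomparability in $Q$ should force a homological ``linkage'' between generators that cannot be separated. A practical alternative to the direct Koszul-homology computation is to invoke one of the existing combinatorial characterizations of Golod squarefree monomial ideals (for instance Berglund--J\"ollenbeck's) and check that $L(P,Q)$ fails it precisely in this regime.
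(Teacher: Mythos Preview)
Your argument for \emph{(b)$\Rightarrow$(a)} is essentially the paper's: linear resolution via \Cref{cor:charlinres} and \cite{HRW} when $Q$ is a chain, and the Herzog--Steurich result for products of ideals in disjoint variables when $P$ is disconnected.

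For \emph{(a)$\Rightarrow$(b)}, your ``practical alternative'' is in fact what the paper does, and it is considerably simpler than your primary approach. The paper invokes the \emph{gcd-condition} of J\"ollenbeck \cite[Lemma~7.4]{Joe}: if $\KK[P,Q]$ is Golod, then for any two coprime generators $u_1,u_2\in G(L(P,Q))$ there must exist a third generator $u\neq u_1,u_2$ dividing $\mathrm{lcm}(u_1,u_2)$. Given incomparable $q_1,q_2\in Q$, the paper takes the two \emph{constant} maps $\phi_i\colon P\to\{q_i\}$; the generators $u_{\phi_1},u_{\phi_2}$ are coprime, and connectedness of $P$ forces any $\phi\in\Hom(P,Q)$ with image in $\{q_1,q_2\}$ to be one of these two constant maps, so no third generator divides their lcm. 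This violates the gcd-condition and finishes the proof.

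Your primary route---building explicit Koszul cycles using a cover relation $p_0\lessdot p_1$---is not carried out, and the cover relation is a red herring: the obstruction already lives at the level of degree-$|P|$ generators coming from constant maps, with no internal structure of $P$ needed beyond connectedness. If you did want to compute the Koszul product directly, the natural cycles are the two generator-classes $[u_{\phi_1}],[u_{\phi_2}]\in H_1$; their product in $H_2$ is nonzero precisely because of the gcd-condition failure above, so the two approaches collapse into one. (A small side remark: your claim that $P$ connected implies $|P|\geq 2$ is not literally true; the one-element case should be noted separately.)
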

\begin{proof}
\begin{asparaenum}
\item[``$(a)\Rightarrow(b)$'':]
	We proceed by proof by contradiction.
	Assume that $\KK[P,Q]$ is Golod, $P$ is connected and $Q$ is not a chain.
	If $\KK[P,Q]$ is Golod, then by \cite[Lemma 7.4]{Joe}, $L(P,Q)$ satisfies the \emph{gcd-condition}, i.e., for any two coprime monomial generators $u_1, u_2 \in G(L(P,Q))$ 
	there is a third monomial generator $u \neq u_{1}, u_{2}$ of $L(P,Q)$, which divides the least common multiple of $u_1$ and $u_2$.
	Now, if $Q$ is not a chain then it contains two incomparable elements $q_1, q_2$.
	The two constant maps $\phi_1, \phi_2: P \to Q$ with image $q_1$ and $q_2$, respectively, correspond to two coprime generators $u_{\phi_1}$ and $u_{\phi_2}$ of $L(P,Q)$.
	But $P$ being connected implies (as in the proof of \Cref{cor:CM}), that there is no monomial generator $u \neq u_{\phi_1}, u_{\phi_2}$ dividing their least common multiple.
	So we arrive at a contradiction.
	%

\item[``$(b)\Rightarrow(a)$'':]
	If $Q$ is a chain, then $\KK[P,Q]$ has a linear resolution and this implies that $\KK[P,Q]$ is Golod, cf.~\cite{HRW}.
	
	Moreover, if $P$ is disconnected with connected components $P_1, \dotsc, P_s$, then $L(P,Q)$ is the product of the ideals $L(P_i, Q)$.
	These ideals live in different variables, hence their product equals their intersection.
	Under this assumption, Herzog and Steurich showed in \cite{HS} that $\KK[P,Q]$ is Golod.
\end{asparaenum}
\end{proof}

It was proven by Seyed Fakhari and Welker in \cite{SFW} that arbitrary products of monomial ideals are Golod, but recently de Stefani found a counterexample to this result \cite{dSt}.
In fact,  as it was  remarked in \cite{dSt} the proofs in \cite{SFW} are correct and the problem lies in \cite{Joe}, on which the former article is based.


\section{Open problems}\label{sect:questions}

In this section, we pose a conjecture regarding formulas for some invariants of the ideals $L(P,Q)$.
We also discuss some problems around the topic which have not been answered yet.

\subsection{The regularity and the projective dimension in the general case}\label{invariants}
We would like to generalize \Cref{thm:regformula} and propose the following conjecture.
\begin{conjecture}[Castelnuovo-Mumford regularity and projective dimension]\label{conj:bounds}
Let $P$ and $Q$ be posets with $s$ and $t$ connected components, respectively. Then
\begin{enumerate}
	\item[(a)] $\reg(\KK[P,Q])=\ac(Q)\cdot (|P|-s)+s-1$.
	\item[(b)] $\pdim(\KK[P,Q]) \leq \ac(P)\cdot (|Q|-t)+s(t-1)+1$.
\end{enumerate}
\end{conjecture}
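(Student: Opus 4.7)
My plan is to prove both parts of Conjecture~\ref{conj:bounds} by first reducing, via Corollary~\ref{cor:disconnectedreg}, to the case where $P$ and $Q$ are both connected (that is, $s=t=1$), and then inducting on $|P|+|Q|$. The unmixed case is already settled by Theorem~\ref{thm:regformula}, so the substance is to handle the mixed case: by Proposition~\ref{pro:unmixed}, this means that neither $P$ nor $Q$ is a chain and they fail to be simultaneously rooted (or co-rooted). The lower bound for part~(a) is provided by Corollary~\ref{cor:lowerboundreg}, so the remaining task is to establish the upper bounds in both parts.

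For part~(a), I would adapt the short exact sequence strategy from Case~III of the proof of Theorem~\ref{thm:regformula}. The idea is to select a maximal element $v \in P$ that witnesses the failure of rootedness (namely, $v$ and some other maximal element share a common lower bound) and a distinguished $q \in Q$, and to decompose $L(P,Q) = J_1 \cap J_2$, where $J_1$ collects those minimal primes $\pp_\psi$ in which the distinguished $q$ is hit by $v$, and $J_2$ gathers the remaining primes. Then I would exploit
\[
\reg \KK[P,Q] \leq \max\bigl\{\reg S/J_1,\; \reg S/J_2,\; \reg S/(J_1+J_2)+1\bigr\}
\]
coming from the Mayer--Vietoris exact sequence, and identify each of $J_1$, $J_2$ and $J_1+J_2$ (possibly after quotienting by a single variable) as the Stanley--Reisner ideal of $L(P',Q')$ for subposets of strictly smaller size, so that the induction hypothesis and the additivity in Corollary~\ref{cor:disconnectedreg} complete the step.

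For part~(b), my approach would combine Terai's formula $\pdim \KK[P,Q] = \reg L(P,Q)^\vee$ with the description of $\Min(L(P,Q))$ via multivalued order-preserving maps (Proposition~\ref{prop:minprimes} and Corollary~\ref{cor:minprimes}). The minimal generators of $L(P,Q)^\vee$ are in bijection with the minimal elements of $\MHom(Q,P)$; in the unmixed case they coincide with $\Hom(Q,P)$, but in general there are \emph{genuinely multivalued} generators of strictly higher degree. The target bound $\ac(P)(|Q|-t)+s(t-1)+1$ should emerge by analyzing the linear strand of $L(P,Q)^\vee$ (coming from the $\Hom(Q,P)$-part) together with a controlled contribution from the multivalued generators, which increase the degree without producing new free summands at the end of the resolution. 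As an alternative, one can try to bound $\pdim \KK[P,Q]$ directly with the same inductive split as in~(a), tracking $\pdim$ instead of $\reg$ through the exact sequence.

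The main obstacle will be controlling $\reg S/(J_1+J_2)$ (and, correspondingly, the contribution of multivalued generators to $\reg L(P,Q)^\vee$ in part~(b)) in the mixed case. In Theorem~\ref{thm:regformula} the root provided a \emph{canonical} choice of $v$ and made $J_1+J_2$ essentially equal to $L(P\setminus v, Q)$ after adding one variable, so the induction hypothesis applied verbatim. Without a root, $J_1+J_2$ need not coincide with any ideal of the form $L(P',Q')$, and its regularity may a priori exceed the recursive target, which would force a more refined decomposition (perhaps involving several cut elements or a filtration indexed by antichains). Moreover, Example~\ref{ex:pdim} shows that the inequality in~(b) is strict in general, so the decomposition used must be flexible enough to absorb that gap while still producing the \emph{tight} equality predicted by~(a).
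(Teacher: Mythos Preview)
The statement you are attempting to prove is \emph{Conjecture}~\ref{conj:bounds}, and the paper does not contain a proof of it: it is explicitly posed as an open problem in Section~\ref{sect:questions}. The paper only records that the conjecture holds in the unmixed case (by Theorem~\ref{thm:regformula}), that the lower bound in~(a) is Corollary~\ref{cor:lowerboundreg}, and that the inequality in~(b) can be strict (Example~\ref{ex:pdim}). So there is no ``paper's own proof'' to compare your proposal against.

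As for the substance of your proposal, it is a reasonable research outline but not a proof. You yourself identify the essential gap: in the mixed case there is no canonical root to play the role of $v$, and you have no argument that $J_1+J_2$ (or any analogous piece of a decomposition) can be identified with, or bounded by, an ideal of the form $L(P',Q')$ so that the induction closes. The sentence ``its regularity may a priori exceed the recursive target, which would force a more refined decomposition'' is precisely where the proof is missing. Similarly, for part~(b) you describe the generators of $L(P,Q)^\vee$ via $\MHom(Q,P)$ and assert that the multivalued generators ``increase the degree without producing new free summands at the end of the resolution,'' but you give no mechanism to establish this; controlling the regularity contribution of generators of varying degrees in an Alexander dual is exactly the hard part, and nothing in the paper (nor in your outline) supplies it. In short, your plan reproduces the known partial results and correctly locates the obstruction, but does not overcome it.
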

Note that this conjecture is true if $L(P,Q)$ is unmixed by \Cref{thm:regformula}.
In addition, it follows from \Cref{cor:lowerboundreg} that the right-hand side in (a) is a lower bound for the regularity in general. 
On the other hand, as  \Cref{ex:pdim} shows, the inequality in part (b) can be strict.


\subsection{The Level property}\label{level}

\Cref{cor:CM} and \Cref{cor:Gor} provide characterizations of all pairs of posets $(P,Q)$ for which $\KK[P,Q]$ is Cohen-Macaulay and Gorenstein, respectively.
A property that is stronger than Cohen-Macaulay-ness but weaker than the Gorenstein property is being level.
Recall that for a homogeneous ideal $I$ in a polynomial ring $S$, $S/I$ is \emph{level} if and only if $S/I$ is Cohen-Macaulay and the last step of its minimal graded free resolution is pure, namely $\beta_{p}(S/I)=\beta_{p,j}(S/I)$, for some $j$, where $p=\pdim S/I$, cf. \cite[Chapter III.3]{StaCCA}.
We would like to suggest the following problem.
\begin{problem}
	Characterize all pairs of posets $(P,Q)$ such that $\KK[P,Q]$ is level%
	\footnote{After the present paper appeared on the arXiv, this problem was solved by D'Al{\`i}, Fl{\o}ystad and Nematbakhsh in \cite{AFN}.}.
\end{problem}

We now give some possible directions concerning this problem.
By \Cref{cor:Gor}, $\KK[P,Q]$ is level if $P$ is connected and $Q$ is an antichain.
Using \Cref{cor:CM} it hence remains to classify those posets $Q$ such that $\KK[[n],Q]$ is level.
The following example shows that the classes of posets such that $\KK[P,Q]$ is Gorenstein, level and Cohen-Macaulay, respectively, differ.

\begin{example}
\begin{asparaenum}[(a)]

\begin{figure}[h!]
\begin{tikzpicture}[scale = 0.6]
	\begin{scope}[shift={(5,0)}]
		\node at (1.1,-1) {$Q_2$};
		\draw (0,1) node[punkt] {} -- (0.5,0) node[punkt] {} -- (1,1) node[punkt] {} -- (1.5,0) node[punkt] {} -- (2,1) node[punkt] {};
	\end{scope}
	\begin{scope}
		\node at (0.6,-1) {$Q_1$};
		\draw (0,0) node[punkt] {} -- (0,1) node[punkt] {} -- (1,0) node[punkt] {} -- (1,1) node[punkt] {};
	\end{scope}
\end{tikzpicture}
\caption{}
\label{figLevel}
\end{figure}
\item Let $P = [2]$ and let $Q_1, Q_2$ be the two posets in \Cref{figLevel}.
Then $\KK[P,Q_1]$ is level, but $\KK[P, Q_2]$ is not. This can be verified directly with \texttt{Macaulay2}.
\item  More generally, if $P$ is a chain and $Q$ is bounded, then $\KK[P,Q]$ is not level unless it has a linear resolution.
  To see this, note that by \Cref{prop:linear} the linear strand in the resolution of $\KK[P,Q]$ continues until the projective dimension of $\KK[P,Q]$. On the other hand, $\KK[P,Q]$ being Cohen-Macaulay implies that the strand of the resolution with the maximal shift also extends up to the projective dimension, cf. \cite[Proposition~4.2.3]{Vi}. Hence, $\KK[P,Q]$ is not level, unless the resolution is linear.

\item Further, $\KK[P,Q]$ is level if $P$ is a chain and $Q$ is a disjoint union of chains.
This follows from \Cref{cor:disconnectedreg} and \Cref{cor:Gor}.
\end{asparaenum}
\end{example}

\bibliographystyle{amsalpha}
\bibliography{Lit}

\end{document}